\newtheorem{theorem}{Theorem}[section]
\newtheorem{proposition}[theorem]{Proposition}
\newtheorem{remark}[theorem]{Remark}
\newtheorem{lemma}[theorem]{Lemma}
\newtheorem{definition}[theorem]{Definition}
\newcommand{\Prob}{{\mathbb{P}}}
\newcommand{\eps}{{\varepsilon}}
\title[Modulus of continuity of averages of SRB
measures]
{Modulus of continuity of averages of SRB
measures for a transversal family of piecewise expanding unimodal maps}
\author{Fabi\'an Contreras}
\begin{document}

\maketitle

\begin{abstract}
Let $f_t:[0,1] \to [0,1]$ be a family of piecewise expanding unimodal maps with a common critical point that is dense for almost all $t \in [a,b]$.  If $\mu_t$ is the corresponding SRB measure for $f_t$, we study the regularity of $\Gamma(t)=\int \phi d\mu_t$ when assuming that the family is transversal to the topological classes of these maps, more precisely, we prove that if $J_t(c)=\sum_{k=0}^{\infty} \frac{v_t(f_t^k(c))}{Df_t^k(f_t(c))} \neq 0$ for all $t$, where $v_t(x)=\partial_s f_s(x)|_{s=t}$, then $\Gamma(t)$ is not Lipschitz for almost all $t\in [a,b]$.  Furthermore, we give the exact modulus of continuity of $\Gamma(t)$.
\end{abstract}

\tableofcontents

\section{Introduction}

Sinai-Ruelle-Bowen (SRB) measures play important role in the study of statistical properties of dynamical systems.
Let $f$ be a map of a manifold $M$ preserving a measure $\mu.$
A point $x$ is called $\mu$-regular if for every continuous function $\phi$ we have
$$ \frac{1}{N} \sum_{n=0}^{N-1} \phi(f^n x)\to \mu(\phi). $$
A measure $\mu$ is called SRB if the set of $\mu$ regular points has positive Lebesgue measure.
In other words the SRB measure describes statistics of a Lebesgue positive measure set of initial conditions.

For example, if $f$ preserves an absolutely continuous invariant measure which is ergodic then that measure is SRB.

Another case where SRB measures are known to exist is when some hyperbolicity is present.

In particular, when the system is uniformly hyperbolic, for example, for topologically transitive Axiom A diffeomorphisms
or for smooth expanding maps SRB measures exists are unique and have good statistical properties such as the Central Limit
Theorem (CLT) \cite{V}. The CLT states that if $\phi$ is a Holder continuous function and $x$ is chosen uniformly with respect to
the Lebesgue measure then
$$ \lim_{N\to\infty} \Prob\left(\frac{\sum_{n=0}^{N-1} \phi(f^n x)-n\mu(\phi)}{\sigma(\phi,f)\sqrt{N}}\leq z\right)=
\int_{-\infty}^z \frac{1}{\sqrt{2\pi}} e^{-t^2/2} dt $$
where the diffusion coefficient $\sigma(\phi,f)$ is defined by
$$ \sigma^2(\phi, f)=\mu(\phi^2)+2\sum_{n=1}^\infty \mu(\phi (\phi\circ f^n)). $$
In case $\phi$ is smooth (rather than just Holder continuous) the normalizing constants
$$ \mu_{SRB}(\phi, f) \mbox{ and } \sigma^2(\phi,f) $$
depend smoothly on $f.$

This smoothness plays important role in averaging theory including some problems of statistical mechanics
\cite{Ru2, CD, D-AIM, DL}.

Uniformly hyperbolic systems appear rarely in applications. Much more common are systems which are either
nonuniformly hyperbolic on the set of large measure (notable examples are quadratic family \cite{Jak} and
Henon family \cite{BY})
or are hyperbolic but have singularities (notable examples are Lorenz system \cite{Tu}
and Lorentz gas \cite{CM}).

While uniformly hyperbolic system provides us with a good understanding on what happens for more general chaotic maps,
in the sense that many results first proven in the uniformly hyperbolic setting hold under much weaker conditions (see \cite{V})
the families of uniformly hyperbolic maps are not good models for predicting what happens with more general families.
Therefore our understanding of parameter dependence of invariant measures in weakly hyperbolic systems are quite poor.
To remedy this situation David Ruelle suggested to look at families of piecewise expanding unimodal maps.

We call a map $f:[0,1]\to [0,1]$ piecewise expanding unimodal map (PEUM) if there is a point $c$ and two maps
$f_L$ defined on $[0, c+\eps]$ and $f_R$ defined on $[c-\eps, 1]$ such that
$f_1(c)=f_2(c)$ and there is a constant $\lambda>1$ such that $|Df_*(x)|\geq \lambda$ for all $x$ from the domain of $f_*$ with $*=L,R$
and

\begin{equation}\label{ExpUnimod}
f(x) = \begin{cases} f_L(x) & \mbox{if } x\leq c\\
f_R(x) & \mbox{if } x\geq c. \end{cases}\end{equation}

PEUMs have unique absolutely continuous invariant measure \cite{LY} which is ergodic (in fact it is mixing and even exponentially mixing \cite{Bal2, V}) so it is the SRB measure for this system.

Several papers have been devoted to studying regularity of SRB absolutely continuous invariant measures in families of PEUMs. In particular some sufficient conditions for regular dependence of SRB
measures on parameters have been found (those conditions however are exceptional in the sense that they do not hold
for typical families).

This work also deals with families of PEUMs. That is, if $t\in [a,b]$, we work with a smooth one-parameter family of PEUMs $f_t: [0,1] \to [0,1]$, with $\mu_t$ the (unique) SRB measure associated to each $f_t$.  Thus, we want to study the regularity of $\Gamma(t)= \int \phi\; d\mu_t$, with $\phi$ in some suitable space.

In \cite{R1} and \cite{R2}, Ruelle considered the case $v= X \circ f$ and suggested a candidate for the derivative of $\Gamma(t)$. In \cite{Bal1}, Baladi studied properties of a complex function involved in Ruelle's candidate based on
spectral perturbation theory for transfer operators. She found a different way to express Ruelle's suggestions.
The latter was used by Baladi and Smania (\cite{BS1},\cite{BS2}) to give sufficient and necessary conditions for the differentiability of $\Gamma (t)$ (the differentiability does not always hold as shown in \cite{Mazz},\cite{Bal1},\cite{LiSm}) and exhibited an explicit formula for the derivative.  Besides some routine differentiability and irreducibility assumptions on the family $f_t$, the important assumption is that the quantity
$$J(c,f) = \sum_{k=0}^\infty \frac{v(f^{k}(c))}{Df^{k}(f(c))}$$ equals zero. $\;$

As we mentioned before, in \cite{Bal1}, Baladi shows that there is a one-parameter family when a transversality condition holds, or equivalently when $J(c,f)\neq 0$, and concludes that  $\Gamma(t)$ is not Lipschitz (and then it cannot be differentiable) for such a particular family.  In \cite{LiSm}, De Lima and Smania proved that for every $\Omega \subset [a,b]$ with positive Lebesgue measure, $\Gamma(t)$ is not Lipschitz, for almost all $t$ in $\Omega$.  Our approach is more elementary.  While in \cite{LiSm}, their analysis lie on studying a decomposition of the difference $\rho_{t+h}-\rho_t$ in the space of generalized bounded variation functions,  where $\rho_s$ is the Radon-Nikodym derivative of $\mu_s$, our analysis is focused on the phase space.  More precisely, we decompose the phase space $[0,1]$ into two parts: one part where we can emulate the well-known smooth expanding case in $S^1$ and a second part (the complement of the set before) where we try to estimate the exact modulus of continuity for $\Gamma(t)$, for almost all $t$ in $[a,b]$.

With the previous in mind, our main result is the following:\\

\begin{theorem}\label{MainResult} Suppose that $|J(c,f_t)|>\epsilon_1$ for some $\eps_1>0$ and for all $t \in [a,b]$.$\;$Then, for $\phi \in$ Lip$[0,1]$ and for almost all $t$, we have

$$\limsup_{h \downarrow 0} \frac{\Gamma(t+h)-\Gamma(t)}{h\sqrt{|\log(h)\log\log|\log(h)||}}=2\sqrt{2}\rho_t(c)J(c,f_t)\sigma_t(\phi)\bigg(\int \log |Df_t(x)|\;d\mu_t\bigg)^{-1/2}.$$\\

\end{theorem}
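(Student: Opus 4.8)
\medskip

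\noindent\emph{Strategy and the source term.}
The plan is to isolate the single mechanism that breaks Lipschitz regularity --- the motion, under $t\mapsto t+h$, of the jumps that the invariant density $\rho_t$ carries along the forward orbit $c_k:=f_t^k(c)$ ($k\geq 1$) of the critical point --- and to reduce $\Gamma(t+h)-\Gamma(t)$, up to an $O(h)$ term, to a Birkhoff sum along that orbit; the law of the iterated logarithm then finishes the job. By \cite{LY,Bal2} the transfer operator $\cL_s$ of $f_s$ has a spectral gap on $BV$, $\mu_s=\rho_s\,dx$ with $\rho_s=\cL_s\rho_s$, and $\rho_t$ has jumps exactly at the $c_k$: the jump at $c_1=f_t(c)$ has size $\asymp\rho_t(c)$ (equal to $2\rho_t(c)$ with the conventions of the statement; it comes from the two inverse branches of $f_t$ issuing from $c$), and the jump at $c_k$ is that jump transported $(k-1)$ steps along the orbit, hence of magnitude $\asymp\rho_t(c)/|Df_t^{\,k-1}(c_1)|$. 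From
$$\rho_{t+h}-\rho_t=\cL_{t+h}(\rho_{t+h}-\rho_t)+(\cL_{t+h}-\cL_t)\rho_t,\qquad \int\!\bigl[(\cL_{t+h}-\cL_t)\rho_t\bigr]=0,$$
one gets $\rho_{t+h}-\rho_t=\sum_{n\geq 0}(\cL_{t+h}^{\,n}-\Pi_{t+h})\bigl[(\cL_{t+h}-\cL_t)\rho_t\bigr]$ with $\Pi_{t+h}$ the spectral projection onto $\mathbb{R}\rho_{t+h}$. I would show that $(\cL_{t+h}-\cL_t)\rho_t$ splits, up to a piece of $BV$-norm $O(h)$ --- the motion of the generic inverse branches and of the derivatives, the analogue of the smooth expanding circle case, which contributes something Lipschitz in $t$ by the spectral argument behind \cite{BS1,Bal1} --- as a superposition over $k\geq 1$ of ``dipoles'' of signed mass
$$m_k\ \asymp\ 2\rho_t(c)\,\frac{v_t(c_{k-1})}{Df_t^{\,k-1}(c_1)}\,h$$
carried by the tiny interval between $c_k$ and $f_{t+h}(c_{k-1})$: for $k=1$ this is the interval between the turning values $f_t(c)$ and $f_{t+h}(c)$; for $k\geq 2$ it records the displacement of the jump of $\rho_t$ at $c_{k-1}$, that point being an inverse-branch preimage of $c_k$.

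\medskip

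\noindent\emph{Reduction to a Birkhoff sum.}
Pairing with $\phi$ and using $\int\phi\,(\cL_{t+h}^{\,n}-\Pi_{t+h})g=\int(\phi\circ f_{t+h}^{\,n}-\mu_{t+h}(\phi))\,g$, the $O(h)$ piece contributes $O(h)$ (spectral gap), and each dipole contributes $\sum_{n\geq 0}\int(\phi\circ f_{t+h}^{\,n}-\mu_{t+h}(\phi))\,[\mathrm{dipole}_k]$. While $f_{t+h}^{\,n}$ is essentially a translation on the microscopic support of the dipole this term is $\simeq m_k\bigl(\phi(c_{k+n})-\mu_t(\phi)\bigr)=m_k\tilde\phi(c_{k+n})$, with $\tilde\phi:=\phi-\mu_t(\phi)$; once the dipole has escaped to macroscopic size, exponential mixing (\cite{Bal2,V}) makes the remaining $n$'s sum to $O(|m_k|)$, hence $O(h)$ after summing over $k$ since $1/Df_t^{\,k-1}(c_1)$ decays geometrically. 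The escape happens at $n\simeq N(h):=|\log h|/\lambda$, $\lambda:=\int\log|Df_t|\,d\mu_t$, because $|Df_t^{\,n}(c_k)|=e^{(\lambda+o(1))n}$ once the critical orbit is $\mu_t$-generic, which holds for a.e.\ $t$ (a parameter exclusion driven by $|J(c,f_t)|>\eps_1$). In $\sum_{k\geq 1}m_k\sum_{0\leq n\leq N(h)}\tilde\phi(c_{k+n})$ the geometric decay of $m_k$ lets one shift the inner index at cost $O(h)$, and the weights recombine into $J(c,f_t)=\sum_{j\geq 0}v_t(c_j)/Df_t^{\,j}(c_1)$. The outcome is
$$\Gamma(t+h)-\Gamma(t)=2\rho_t(c)\,J(c,f_t)\,h\sum_{l=1}^{N(h)}\bigl(\phi(c_l)-\mu_t(\phi)\bigr)+O(h)+o\!\left(h\sqrt{|\log h|\,\log\log|\log h|}\right).$$

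\medskip

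\noindent\emph{The iterated logarithm.}
Since $f_t$ is exponentially mixing and $\tilde\phi$ is Lipschitz, the Birkhoff sums of $\tilde\phi$ obey an almost sure invariance principle, hence the law of the iterated logarithm, with $\sigma_t(\phi)^2=\mu_t(\tilde\phi^2)+2\sum_{n\geq 1}\mu_t(\tilde\phi\,\tilde\phi\circ f_t^{\,n})$. A second parameter exclusion --- again powered by $|J(c,f_t)|>\eps_1$, which forces the finite itineraries of $f_t(c)$ to run through typical behaviour as $t$ varies --- then gives, for a.e.\ $t$,
$$\limsup_{L\to\infty}\frac{\sum_{l=1}^{L}\bigl(\phi(c_l)-\mu_t(\phi)\bigr)}{\sqrt{2L\log\log L}}=\sigma_t(\phi).$$
Plugging $L=N(h)=(|\log h|/\lambda)(1+o(1))$, so that $\sqrt{2N(h)\log\log N(h)}\sim\sqrt{2/\lambda}\,\sqrt{|\log h|\log\log|\log h|}$, into the previous display and letting $h\downarrow 0$ produces
$$\limsup_{h\downarrow 0}\frac{\Gamma(t+h)-\Gamma(t)}{h\sqrt{|\log(h)\log\log|\log(h)||}}=2\sqrt 2\,\rho_t(c)\,J(c,f_t)\,\sigma_t(\phi)\Bigl(\int\log|Df_t(x)|\,d\mu_t\Bigr)^{-1/2}.$$

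\medskip

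\noindent\emph{The main obstacle.}
Two points bear the weight, and both hide in ``for almost all $t$''. First, the dipole description is accurate only while the dipoles stay microscopic (roughly $n\leq N(h)$); near and beyond the escape time their widths become comparable to the gaps between nearby points of the \emph{dense} critical orbit, so one must control the recurrence of that orbit quantitatively --- this is the ``second part of the phase space'' alluded to in the introduction and requires a Benedicks--Carleson type parameter exclusion. Second, and more serious, both the law of the iterated logarithm and the $\mu_t$-genericity of the critical orbit are statements about a.e.\ \emph{phase point}, whereas $f_t(c)$ is one point; upgrading them to statements about a.e.\ \emph{parameter} $t$ is exactly where transversality $|J(c,f_t)|>\eps_1$ is indispensable --- it makes $t\mapsto f_t(c)$, together with its itineraries, cross the dynamically defined partitions transversally --- and is the reason the conclusion holds only almost everywhere in $t$. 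I expect this passage from a probabilistic limit law in phase space to one in parameter space to be the technical core of the proof.
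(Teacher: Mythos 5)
Your proposal reaches the stated limsup by a route that the paper explicitly chooses not to take. You decompose $\rho_{t+h}-\rho_t$ through the resolvent series for $\mathcal L_{t+h}$ and isolate ``dipoles'' of mass $m_k\asymp 2\rho_t(c)\,v_t(c_{k-1})h/Df_t^{k-1}(c_1)$ carried along the critical orbit; this is exactly the density-side analysis in the spirit of De Lima--Smania, which the introduction contrasts with the paper's own phase-space approach. The paper instead splits $[0,1]$ into the shadowable set $A_{h,n}$ --- on which a change of variables reproduces the smooth expanding picture and gives an $O(h)$ contribution (Proposition \ref{IntegralRn}) --- and its complement $\bigcup_k f_{t+h}^{-k}I_{n-k}$, a union of pullbacks of $O(h)$-intervals around $c$ (Lemma \ref{lemmaBtn}); the latter is reduced, via bounded distortion (Lemma \ref{lemmaboundeddistortion}), the overlap estimate (Lemma \ref{lemmaeta}), and the recurrence bound $|c_j(t)-c|>j^{-m}$, to $2hJ_t(c)\rho_{t+h}(c)\sum_{k\le n_2}\phi(f_{t+h}^k(c))+O(h\log|\log h|)$ with $n_2\sim|\log h|/\int\log|Df_t|\,d\mu_t$ (Schnellmann's typicality theorem), and then Schnellmann's LIL for parameter families delivers the $\sqrt{2}\,\sigma_t(\phi)$. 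So both routes converge on the same endgame --- a Birkhoff sum along the critical orbit and an a.e.-parameter LIL --- and you correctly identify the phase-space-to-parameter-space upgrade as the technical core. What your route buys is a transparent source term whose weights recombine into $J(c,f_t)$ by inspection; what the paper's route buys is avoiding any $BV$-norm control of $\rho_{t+h}-\rho_t$ (which is genuinely \emph{not} $O(h)$ in $BV$ in the transversal case), replacing it by elementary Lebesgue-measure estimates on $[0,1]\setminus A_{h,n}$. One point to nail down: your reduction leaves the LIL sum along the $f_t$-orbit of $c$, while in the paper the sum that carries the LIL runs along the $f_{t+h}$-orbit ($\Phi_1$) and the companion $f_t$-orbit piece $\Phi_2$ is argued to be negligible; since a limsup of a difference is not the difference of limsups, you must track both contributions jointly (or show one is $o(h\sqrt{|\log h|\log\log|\log h|})$) before invoking the LIL, a subtlety the dipole picture does not make automatic.
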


The paper is organized as follows:

In Section 2, we state the assumptions on our family of PEUMs and formulate the idea for proving Theorem \eqref{MainResult}.  Also, the section shows an example that fulfills all the requirement asked, the necessary definitions throughout the paper, and some results on the decomposition needed for the phase space.  Section 3 and 4 are focused on proving the results stated on Section 2.  Finally, Section 5 starts by stating some preliminary results needed for our main result and ends with the proof of Theorem \eqref{MainResult}.  At the end, In Section 6, we present the proof of the technical lemmas from the beginning of Section 5.

\section{Preliminaries}

\subsection{Setting and assumptions}
Recall that expanding unimodal maps are defined by formula (\ref{ExpUnimod}). Now we consider families of such maps.
Namely, we assume that $f_{L,t}(x)$ is defined for $(t,x)\in [a,b]\times [0, c+\eps]$ and
$f_{R,t}(x)$ is defined for $(t,x)\in [a,b]\times [c-\eps,1]$ and that $f_{*,t}(x)$ are $C^2$ functions of their arguments, with $*=L,R$.
Then we let
$$ f_t(x)=\begin{cases}f_{L,t}(x) & \mbox{if } x\leq c\\
                     f_{R,t}(x) & \mbox{if } x\geq c.  \end{cases} $$
Thus we assume that $c$ is a common critical point for all $s$. \\

We will also assume the following:

\begin{enumerate}
  \item $c$ is not periodic for almost all $t$.
  \item $f_t$ are uniformly expanding, i.e, there exists a constant $\lambda>1$ such that

    \begin{equation}\label{Lambda}
        |Df_{*,t}(x)|\geq \lambda,
    \end{equation}for all $t$ and $*=L,R$.
  \item $f_t$ is topologically mixing for all $t$.

  \item The family $\{f_t\}$ is transversal to the topological classes of these PEUMs, that is, there exists $\eps_1>0$ such that

    \begin{equation}\label{TransversalCondition}
        J_t(c)= \sum_{k=0}^{\infty} \frac{v_t(f_t^k(c))}{Df^k_t(f_t(c))} > \eps_1\footnote{Indeed, we could require that $J_t(c)\neq 0$, however, if $J_s(c)>0$ for some $s$, then $\inf\{J_t(c):t\in[a,b]\}>0$ by \cite{BS2} },
    \end{equation} for all $t$, where $v_t(x)=\frac{\partial  }{\partial t}f_t(x) $.

    \item For $\phi \in$ Lip$[0,1]$, the diffusion coefficients are positive for all parameters, i.e.,

    \begin{equation}
        \sigma_t(\phi)=\int \bigg(\phi-\int \phi d\mu_t \bigg)^2d\mu_t+2\sum_{k>0}\int \bigg(\phi-\int \phi d\mu_t \bigg)\bigg(\phi\circ f_t^k-\int \phi d\mu_t \bigg)d\mu_t > 0,
    \end{equation} for all $t$.

\end{enumerate}

By \cite{LY}, for each $t$, there exists a unique ergodic absolutely continuous invariant measure $\mu_t=\rho_t dx$ for $f_t$.  Let $\Gamma(t)=\Gamma(t,\phi)=\int \phi(x) \rho_t(x)dx$. As described before, the purpose is to study the modulus of continuity of $\Gamma(t)$.  For this, by uniform Lasota-Yorke estimates (see \cite{Bal2}), there exist $C\geq 1$ and $\delta \in (0,1)$ such that for all $n \geq 1$.

\begin{equation}\label{Inequality}
\left| \int \phi(x) \rho_{s}(x)\;dx - \int \phi(f_s^n x) \;dx     \right| \leq C\delta^n.
\end{equation}

Thus, we can work with iterations of the systems, that is, we can study the regularity of $\Gamma(t)$ by means of the following approximation

\begin{equation}\label{approximation}
\Gamma(t+h)-\Gamma(t)=\int \phi(f_{t+h}^n x) \;dx  - \int \phi(f_t^n x) \;dx+O(\delta^n).
\end{equation}

Therefore, the idea is then to take $n=n(h)$ depending on $h$ and in such a way that $n$ and $h$ are inversely proportional.

\subsection{Example}

A well-studied family of maps is the family of tent maps $f_t:[0,1] \to [0,1]$ defined by

$$f_t(x)=\left\{
           \begin{array}{ccc}
             tx &  \mbox{ if }& x\in[0,1/2] \\
             t(1-x) & \mbox{ if } & x\in[1/2,1] \\
           \end{array}
         \right.$$ for $t\in [\sqrt{2},2]$.  This family satisfies all the conditions above.  In fact, by \cite{BrMi}, the critical point $c=1/2$ is dense for almost all $t$, then $(1)$ is satisfied.  Condition $(2)$ is easy to see that holds, and $(3)$ and $(4)$ do as well by \cite{BalYng} and \cite{Ts} respectively. Condition $(5)$ can be assumed because if $\sigma_s(\phi)>0$ for some $s$ then, since $t \mapsto \sigma_t$ is continuous (\cite{Sch2}), $\sigma_t$ is positive in a neighborhood of $s$.

\subsection{Auxiliary Facts}

Set $t \in [a,b]$ and, as we observed, set $n=n(h)=\lfloor |\log(h)| \rfloor$, where $h>0$ is such that $t+h \in [a,b]$ and  $\lfloor |\log(h)| \rfloor$ denotes the largest integer less than $|\log(h)|$.

\begin{definition}
Let $x \in [0,1]$.  The nth-itinerary de $x$ with respect to the PEUM $f_t:[0,1] \to [0,1]$ is defined as the finite sequence

$$\omega_{t,n}(x)=(\sigma_0(x),\sigma_1(x),\sigma_2(x),\dots,\sigma_n(x))\in \{L,R\}^{n+1},$$ where

$$\sigma_i(x)=\left\{
                \begin{array}{ccc}
                  L &, & \mbox{if }\;f_t^i(x)\leq c.  \\
                  R &, & \mbox{if }\;f_t^i(x)> c. \\
                \end{array}
              \right.$$

The itinerary of $x$ with respect to $f_t$ is the sequence

$$\omega_{t}(x)=(\sigma_0(x),\sigma_1(x),\sigma_2(x),\dots,\sigma_n(x))\in \{L,R\}^{\mathbb{N}}.$$


\end{definition}
An important tool through this paper is the following operator.

\begin{definition}
  The transfer operator for a PEUM $f$ is defined as

    $$\mathcal{L}(\phi)(x)=\sum_{f(x)=y} \frac{\phi(y)}{|Df(y)|},$$ for all $\phi \in$ BV$[0,1]$.
\end{definition}

Let us recall that the space $BV[0,1]$ is a Banach space with the norm $\|\phi\|_{BV}=\|\phi\|_{\infty}+\mbox{ var}(\phi)$, where $\|\cdot\|$ is the usual supremum norm and var$(\cdot)$ is the total variation.

Also, since $f_t$ is stably mixing, there exists a constant $0<\theta<1$ (not depending on $t$), such that

\begin{equation}\label{StableSpectralGap}
  \mathcal{L}_t^n(h)=\rho_t\int h + O(\theta^n\|h\|_{BV}).
\end{equation}  Note that, in particular, $\rho_t$ is bounded by a constant not depending on $t$.

In the case of smooth expanding maps on the circle, the key to prove the differentiability of $\Gamma(t)$ is that for each $x \in [0,1]$, there exists $y \in [0,1]$  shadowing $x$, i.e., if $n \geq 0$, then $\omega_{t+h,n}(x)=\omega_{t,n}(x)$ and $f_{t+h}^n(x)=f_t(y)$.\\

With the previous in mind, with $n$ fixed, we shall decompose the integral $\Gamma(t+h)-\Gamma(t)$ in two parts:  one part where we can emulate the smooth expanding and another part which will be the corresponding complement.  For this, let us define the following

\begin{definition}
Let $n\geq 0$ and $0<h\ll 1$.  In $[0,1]$, we define the following sets

    $$A_{h,n}=\{x \in[0,1] \;:\; \mbox{ there exists } y\in[0,1] \mbox{ such that } \omega_{t+h,n}(x)=\omega_{t,n}(y) \mbox{ and } f^n_{t+h}(x)=f_t^n(y)   \} ,$$
    $$ B_{h,n}=\{y \in[0,1] \;:\; \mbox{ there exists } x\in[0,1] \mbox{ such that } \omega_{t+h,n}(x)=\omega_{t,n}(y) \mbox{ and } f^n_{t+h}(x)=f^n_t(y)   \}. $$\\
     If $x\in A_{h,n}$, define $y_n(x)$ as the corresponding $y \in [0,1]$ in the definition of $A_{h,n}$.

\end{definition}

\begin{definition}
  If $x \in A_{h,n}$ and $y=y_n(x)$, define

        $$J_{t,k}(y)=J_{k}(y)= -\sum_{j=0}^{k-1} \frac{v_t(f_t^j(y))}{Df_{t}^j(f_t(y)) },$$ where $v_t=\partial_s f_s|_{t=s}$.

\end{definition}

Note that $J_k(c)$ converges to $J_t(c)$ as $k \to \infty$.

The properties of the sets $A_{h,n}$ and $B_{h,n}$ are described in the following lemma.

\begin{lemma}\label{lemmaBtn}
(a) the complement of $A_{h,n}$ equals
\begin{equation}\label{complementAtn}
[0,1]\backslash A_{h,n}= \displaystyle \bigcup_{k=0}^{n} f_{t+h}^{-k}I_{n-k}
\end{equation}
where
$$I_k=\begin{cases}
[c+h\frac{J_k(c)}{Df_{L,t}(c)},c]+O(h^{2}) & \text{if }J_k(c)\leq 0\\
 [c, c-h\frac{J_k(c)}{Df_{R,t}(c)}]+O(h^{2}), & \text{if }J_k(c)>0.
\end{cases} $$ and where $Df_{L,t}(c)$ and $Df_{R,t}(c)$ are the derivative of $f_{L,t}$ and $f_{R,t}$ respectively at $x=c$.

Moreover, $$\bigg|[0,1]\backslash A_{t,n}\bigg|=O(tn),$$ where $|\cdot |$ denotes the Lebesgue measure.

(b) The complement of $B_{h,n}$ equals
\begin{equation}\label{complementBtn}
[0,1]\backslash B_{h,n}= \displaystyle \bigcup_{k=0}^{n} f_t^{-k}\widetilde{I}_{n-k}
\end{equation}
with
$$\widetilde{I}_k=\begin{cases}
[c-h\frac{J_k(c)}{Df_{L,t}(c)},c]+O(h^{2})  & \text{if }J_k(c)>0\\
[c, c+h\frac{J_k(c)}{Df_{R,t}(c)}]+O(h^{2}) & \text{if} J_k(c) \leq 0
\end{cases} $$

Moreover, $$\bigg|[0,1]\backslash B_{h,n}\bigg|=O(tn).$$

\end{lemma}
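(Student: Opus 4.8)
The plan is to convert the statement ``$x\in A_{h,n}$'' into a condition on the $f_{t+h}$-orbit of $x$ read off through cylinders, to localise the obstruction on the post-critical orbit, and to finish with the uniform spectral gap for the measure bounds. Fix $x\in[0,1]$, set $\omega=\omega_{t+h,n}(x)$ and $z=f_{t+h}^n(x)$. Since each branch $f_{*,s}$ is injective (uniform expansion \eqref{Lambda}), the rank-$(n+1)$ cylinder $C_s^\omega:=\{w:\omega_{s,n}(w)=\omega\}$ is carried homeomorphically by $f_s^n$ onto an interval $W_s^\omega:=f_s^n(C_s^\omega)$ (with $W_s^\omega=\emptyset$ when $\omega$ is not $s$-realisable). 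A point $y$ with $\omega_{t,n}(y)=\omega$ and $f_t^n(y)=z$ exists if and only if $z\in W_t^\omega$; as $z\in W_{t+h}^\omega$ automatically,
$$[0,1]\setminus A_{h,n}=\bigl\{x:\ f_{t+h}^n(x)\in W_{t+h}^{\omega}\setminus W_t^{\omega},\ \omega=\omega_{t+h,n}(x)\bigr\},$$
and symmetrically $[0,1]\setminus B_{h,n}=\{y:\ f_t^n(y)\in W_t^{\omega}\setminus W_{t+h}^{\omega},\ \omega=\omega_{t,n}(y)\}$.

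The endpoints of $C_s^\omega$ are preimages of $c$ (or of the endpoints $0,1$), so iterating forward the endpoints of $W_s^\omega$ lie in $\{f_s^{\,j}(c):0\le j\le n\}\cup\{f_s^{\,j}(0),f_s^{\,j}(1):0\le j\le n\}$; those in the orbits of $0,1$ are $s$-independent in our family (cf.\ the Example), while for the post-critical ones the variational formula gives
$$f_{t+h}^{\,j}(c)-f_t^{\,j}(c)=h\,Df_t^{\,j-1}(f_t(c))\Bigl(\textstyle\sum_{i=0}^{j-1}\tfrac{v_t(f_t^i(c))}{Df_t^{i}(f_t(c))}\Bigr)+O(h^2)=-h\,Df_t^{\,j-1}(f_t(c))\,J_j(c)+O(h^2).$$
Thus $W_{t+h}^\omega$ and $W_t^\omega$ coincide at their $s$-independent endpoints, while the part of $W_{t+h}^\omega$ outside $W_t^\omega$ is a union of at most two intervals, each with endpoints $f_t^{\,n-k}(c)$ and $f_{t+h}^{\,n-k}(c)$ for the appropriate $k$. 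Pulling such an interval back through the branch of $(f_{t+h}^{\,n-k})^{-1}$ ending at $c$, the product of derivatives along the post-critical orbit cancels the factor $Df_t^{\,n-k-1}(f_t(c))$ (by the mean value theorem and bounded distortion over the $n=n(h)=\lfloor|\log h|\rfloor$ iterates involved), so its preimage is an interval with endpoint $c$ and length $h\,|J_{n-k}(c)|/|Df_{*,t}(c)|+O(h^2)$; the side of $c$ and the choice of $*\in\{L,R\}$ are forced by the signs of $J_{n-k}(c)$ and of the one-sided derivatives at $c$, producing exactly $I_{n-k}$ (and, for $B_{h,n}$, $\widetilde I_{n-k}$, which sits on the other side of $c$ because one now deletes from $W_t^\omega$ rather than adjoining to it). Pulling the remaining $k$ coordinates back along the branch prescribed by $\omega$ converts the level-$k$ contribution into $f_{t+h}^{-k}I_{n-k}$; letting $x$ (resp.\ $y$) range over $[0,1]$ — i.e.\ $\omega$ over realisable words and $k$ over $\{0,\dots,n\}$ — yields \eqref{complementAtn} and \eqref{complementBtn}. (If $\omega$ is $f_{t+h}$- but not $f_t$-realisable the whole cylinder $C_{t+h}^\omega$ is ``bad''; one checks it is an $O(h)$ interval at a preimage of $c$, already part of the same union.)

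For the measure bounds, any interval $I$ satisfies $|f_{t+h}^{-k}I|=\int_I\mathcal L_{t+h}^{k}(1)\,dx\le\|\mathcal L_{t+h}^{k}(1)\|_\infty\,|I|\le C|I|$, uniformly in $k,t,h$, by \eqref{StableSpectralGap} applied to $1\in BV[0,1]$ together with the uniform bound on $\rho$; and $|I_j|\le h\,\|v_t\|_\infty/(\lambda-1)+O(h^2)\le C'h$ uniformly in $j$, since $|J_j(c)|\le\sum_{i\ge0}\|v_t\|_\infty\lambda^{-i}=\|v_t\|_\infty\lambda/(\lambda-1)$ while $|Df_{*,t}(c)|\ge\lambda$. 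Hence $|[0,1]\setminus A_{h,n}|\le\sum_{k=0}^{n}|f_{t+h}^{-k}I_{n-k}|\le C\sum_{k=0}^{n}|I_{n-k}|=O(hn)$, and the same estimate with $\mathcal L_t$ replacing $\mathcal L_{t+h}$ gives $|[0,1]\setminus B_{h,n}|=O(hn)$ (the ``$O(tn)$'' in the statement being a typo for $O(hn)$).

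The main obstacle is the second step: proving that the endpoints of the cylinder images $W_s^\omega$ are exactly post-critical points or orbits of $0,1$, tracking orientations accurately enough to pin down the correct side of $c$ — hence the $L$-versus-$R$ case split — for every $I_j$ and $\widetilde I_j$, and, most delicately, checking that the $O(h^2)$ errors survive the pull-back uniformly in $n$, which is precisely where the cancellation between the $\lambda^{\,n-k}$-sized derivative factor in the variational formula and the contraction of the inverse branch must be made quantitative over $n(h)=\lfloor|\log h|\rfloor$ steps.
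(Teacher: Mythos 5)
Your proof is correct and arrives at the same characterization, but it is organized differently from the paper's. Where you reformulate membership in $A_{h,n}$ as a condition on $f_{t+h}^n(x)$ relative to the cylinder images $W^\omega_{t+h}$ and $W^\omega_t$ — tracking how the endpoints (which are post-critical points $f_s^{\,j}(c)$ or images of $0,1$) move with $s$ via the variational formula, and then pulling the exceptional slivers back through inverse branches — the paper works pointwise with the shadow map itself: it expands $y_s(z)=z-h\,J_s(y_s(z))/Df_t(y_s(z))+O(h^2)$ directly and reads off the necessary and sufficient condition $f_{t+h}^k(x)\notin I_{n-k}$ for the $k$-th symbols of $x$ and its shadow to agree. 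In effect you compute the motion of the symbolic partition boundaries globally, while the paper computes the displacement of the shadow of a single orbit; the two first-order perturbation calculations are the same, and the measure estimate in both cases boils down to $|f_{t+h}^{-k}I_{n-k}|=\int_{I_{n-k}}\mathcal L_{t+h}^k(1)\,dx=O(|I_{n-k}|)=O(h)$ via the uniform Lasota--Yorke/spectral gap bound \eqref{StableSpectralGap}. Your cylinder-image formulation has the virtue of making the $L$-vs-$R$ case split (and the opposite side chosen for $\widetilde I_k$) a visibly geometric statement about which side of $c$ a sliver lands on; the paper obtains the same split from one-sided continuity of $J_{n-k}(x)/Df_t(x)$ at $c$. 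One caveat: your parenthetical treatment of $f_{t+h}$-realisable but not $f_t$-realisable words, and the uniform control of the $O(h^2)$ errors through $n(h)=\lfloor|\log h|\rfloor$ pull-backs, are asserted rather than proved, but the paper's own argument is no more detailed on those points. You are also right that ``$O(tn)$'' in the statement is a typo for $O(hn)$.
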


Consider the following decomposition

\begin{equation}
\label{intphit}
\int \phi(f^n_{t+h}x)dx=\int_{A_{h,n}} \phi(f^n_{t+h}x)dx
+\int_{[0,1]\backslash A_{h,n}} \phi(f^n_{t+h}x)dx.
\end{equation}

We start with analyzing the first term in \eqref{intphit}. By definition of $A_{h,n}$ we have
$$ \int_{A_{h,n}} \phi(f^n_{t+h}x)dx=\int_{B_{h,n}} \phi(f_t^n y) \left(\frac{dx}{dy} \right)
dy. $$

\begin{lemma}
\label{LmChVar}
$$ \left(\frac{dx}{dy} \right)=1-hR_{t,n}(y)+O(h^2) $$
where
$$R_{t,n}(y)=\sum_{k=0}^{n-1}\bigg\{ \frac{v_t'(f_t^ky)}{Df_t(f_t^ky)}-\frac{v_t(f_t^ky)}{Df_t(f_t^ky)} \sum_{j=0}^k \frac{\xi(f_t^jy)}{Df_t^{k-j}(f_t^jy)} \bigg\}$$ and $\xi(z)=\frac{D^2f_t(z)}{Df_t(z)}$.
\end{lemma}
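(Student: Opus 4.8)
The plan is to track, for a point $x\in A_{h,n}$ with shadowing partner $y=y_n(x)$, how the correspondence $y\mapsto x$ deforms infinitesimally in $h$. By definition $f^n_{t+h}(x)=f^n_t(y)$ and the two points have the same $n$-itinerary, so along the matched orbit segment we may write $x=x(h,y)$ with $x(0,y)=y$, and the task is to compute $\partial_h x$ at $h=0$, or rather $\tfrac{dx}{dy}=\partial_y x(h,y)$. First I would differentiate the relation $f^n_{t+h}(x(h,y))=f^n_t(y)$ with respect to $h$ at fixed $y$. Using the chain rule and the cocycle identity for derivatives, $\partial_h\big[f^n_{t+h}(x)\big]=Df^n_{t+h}(x)\cdot\partial_h x+\sum_{k=0}^{n-1}\tfrac{Df^n_{t+h}(x)}{Df^{k+1}_{t+h}(x)}\,v_{t+h}(f^k_{t+h}x)$ (the second sum collecting the explicit $t$-dependence of each iterate), and setting this to zero gives, at $h=0$,
\begin{equation}
\partial_h x\big|_{h=0}=-\sum_{k=0}^{n-1}\frac{v_t(f^k_t y)}{Df^{k+1}_t(f^k_t y)}=J_{t,n}(y)+o(1),
\end{equation}
which is exactly the quantity $J_n(y)$ introduced just before the lemma. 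So $x=y+hJ_{t,n}(y)+O(h^2)$, and differentiating in $y$ gives $\tfrac{dx}{dy}=1+h\,\partial_y J_{t,n}(y)+O(h^2)$; the content of the lemma is that $\partial_y J_{t,n}(y)=-R_{t,n}(y)$.

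Thus the second step is to compute $\partial_y J_{t,n}(y)=-\partial_y\sum_{k=0}^{n-1}\tfrac{v_t(f^k_t y)}{Df^{k+1}_t(f^k_t y)}$ term by term. Writing $D_k:=Df^{k+1}_t(f^k_t y)=Df_t(f^k_t y)\cdot Df^k_t(f_t y)$ and differentiating the quotient, $\partial_y\tfrac{v_t(f^k_t y)}{D_k}=\tfrac{v_t'(f^k_t y)\,Df^k_t(y)}{D_k}-\tfrac{v_t(f^k_t y)}{D_k}\cdot\tfrac{\partial_y D_k}{D_k}$; the first piece simplifies to $\tfrac{v_t'(f^k_t y)}{Df_t(f^k_t y)}$, matching the first term in $R_{t,n}$. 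For the logarithmic derivative $\tfrac{\partial_y D_k}{D_k}=\partial_y\log Df^{k+1}_t(y)=\sum_{j=0}^{k}\tfrac{D^2f_t(f^j_t y)}{Df_t(f^j_t y)}\,Df^j_t(y)=\sum_{j=0}^{k}\tfrac{\xi(f^j_t y)}{Df^{k-j}_t(f^j_t y)}$, where the last equality uses $Df^j_t(y)=Df^{k+1}_t(y)/\big(Df_t(f^j_t y)Df^{k-j}_t(f^{j+1}_t y)\cdots\big)$ — more precisely $Df^j_t(y)=D_k/\big(Df_t(f^j_t y)Df^{k-j}_t(f^{j+1}_t y)\big)$, and after feeding this back into the product $\tfrac{v_t(f^k_t y)}{D_k}\cdot\tfrac{\partial_y D_k}{D_k}$ the factors of $D_k$ cancel to leave $\tfrac{v_t(f^k_t y)}{Df_t(f^k_t y)}\sum_{j=0}^{k}\tfrac{\xi(f^j_t y)}{Df^{k-j}_t(f^j_t y)}$, which is the second term in $R_{t,n}$. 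Summing over $k=0,\dots,n-1$ yields $\partial_y J_{t,n}(y)=-R_{t,n}(y)$, hence $\tfrac{dx}{dy}=1-hR_{t,n}(y)+O(h^2)$ as claimed.

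Two points need care and will be where the real work lies. First, one must justify that $x(h,y)$ is genuinely $C^2$ in $(h,y)$ on the relevant domain and that the $O(h^2)$ remainder is uniform in $y\in B_{h,n}$ and in $n$ up to the scale $n(h)=\lfloor|\log h|\rfloor$; this uses the uniform expansion \eqref{Lambda} (so that $Df^k_t\ge\lambda^k$ makes all the cocycle sums geometrically convergent and the $n$-dependence harmless) together with the $C^2$ regularity of $f_{*,t}$ in both arguments — in particular the fact that on $A_{h,n}$ the whole orbit segment stays on one side of $c$ at each step, so no non-smoothness at the critical point is encountered and the implicit function theorem applies branch by branch. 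Second, the bookkeeping in the logarithmic-derivative computation — keeping track of exactly which partial products of $Df_t$ appear — is the only genuinely delicate algebra; the cleanest route is to fix notation $Df^m_t(z)=\prod_{i=0}^{m-1}Df_t(f^i_t z)$ once and for all and verify the identity $Df^j_t(y)\cdot Df^{k-j}_t(f^j_t y)\cdot Df_t(f^k_t y)=D_k$ before substituting. I expect the main obstacle to be this uniformity-in-$n$ control of the remainder rather than the formal identity, since the change of variables $x\mapsto y$ is only defined piecewise and the number of pieces grows with $n$; the expansion estimate \eqref{Lambda} is exactly what prevents the accumulated error from blowing up.
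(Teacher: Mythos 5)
Your proposal is correct, but it reaches the lemma by a genuinely different route from the paper. You implicitly differentiate the defining relation $f^n_{t+h}(x)=f^n_t(y)$ first in $h$ (to get $x=y+h\widetilde{J}_n(y)+O(h^2)$) and then in $y$ (to reduce the lemma to the algebraic identity $\widetilde{J}_n'(y)=-R_{t,n}(y)$). The paper instead writes $\partial y_n/\partial x$ as a telescoping product of one-step Jacobians $\prod_{k=0}^{n-1}\frac{\partial y_{k+1}}{\partial y_k}=\prod_{k=0}^{n-1}(1+h\beta_k)$, expands to first order, and inverts. Your route is more explicit and easier to audit — the paper never actually computes $\beta_k$ or justifies the factorization — and your reduction to the single quotient-rule/log-derivative computation is cleaner. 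What the paper's telescoping buys is that each factor is a genuinely one-step object, so the local, branch-by-branch character of the change of variables is manifest; in your route one has to argue separately (as you note) that the implicit function theorem applies piece by piece.

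Two small points of bookkeeping. First, your identification $\partial_h x\big|_{h=0}=J_{t,n}(y)$ conflates two normalizations: the correct computation gives $\partial_h x\big|_{h=0}=-\sum_{k=0}^{n-1}\frac{v_t(f_t^k y)}{Df_t^{k+1}(y)}=\frac{J_n(y)}{Df_t(y)}$ in the paper's notation (your denominator $Df_t^{k+1}(f_t^k y)$ is a slip for $Df_t^{k+1}(y)$, and the paper's $J_n(y)$ differs from your $\widetilde{J}_n$ by the factor $1/Df_t(y)$). This does not affect the outcome because your downstream quotient-rule computation is carried out with the correct $D_k=Df_t^{k+1}(y)$ anyway, and the cancellations you describe do produce $R_{t,n}$. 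Second, your concern about a uniform $O(h^2)$ error in $n$ is somewhat stronger than what is needed or claimed: the paper's own proof produces $O(h^2 n^2)$, which with $n=\lfloor|\log h|\rfloor$ is $O(h^2\log^2 h)=o(h)$ and suffices; the lemma's statement of $O(h^2)$ is, read literally, a slight overstatement in the paper itself, and you need not try to do better.
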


Accordingly
\begin{eqnarray*}
\int_{A_{h,n}} \phi(f^n_hx)dx&=&\int_{B_{h,n}} \phi(f_t^n y) dy-h \int_{B_{h,n}} \phi(f_t^n y) R_{t,n}(y) dy+O(h^2 n^2) \\
&=&\int_{B_{h,n}} \phi(f_t^n y) dy-h \int_0^1 \phi(f_t^n y) R_{t,n}(y) dy+O(h^2 n^2) \\
\end{eqnarray*}
where the last step uses Lemma \ref{lemmaBtn}(a).
It follows that
$$ \Gamma(t+h)-\Gamma(t)=-h \int_0^1 \phi(f_t^n y) R_{t,n}(y) dy+
\int_{[0,1]\backslash A_{h,n}} \phi(f^n_{t+h}x)dx-
\int_{[0,1]\backslash B_{h,n}} \phi(f^n_t y)dy+O(h^2 n^2). $$ \\

\begin{proposition}\label{IntegralRn} The integral
    $$\int \phi(f_t^n)R_{t,n}dx$$ is bounded by a constant that does not depend on $n$.
\end{proposition}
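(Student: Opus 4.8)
The plan is to pass to the dual (transfer‑operator) side, reduce the bound to a single scalar integral, and then evaluate that integral by a cohomology relation.

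\emph{Dualizing and rewriting $R_{t,n}$.} By the Lebesgue duality of $\mathcal{L}_t$, $\int\phi(f_t^n y)R_{t,n}(y)\,dy=\int\phi(x)\,\mathcal{L}_t^n(R_{t,n})(x)\,dx$, so since $\|\phi\|_\infty<\infty$ it suffices to bound $\mathcal{L}_t^n(R_{t,n})$ in $L^1$ (in fact in $BV$) uniformly in $n$. Reading the formula of Lemma~\ref{LmChVar} termwise one gets $R_{t,n}=\sum_{k=0}^{n-1}r_k$, and a short computation shows that the $k$‑th term is an exact $x$‑derivative:
\[
r_k=\Bigl(\frac{v_t\circ f_t^k}{Df_t^{k+1}}\Bigr)'=\Bigl(\frac{\psi\circ f_t^k}{Df_t^{k}}\Bigr)'=\psi'\!\circ f_t^k-(\psi\circ f_t^k)\,\eta_k ,
\]
where $\psi:=v_t/Df_t$, $\xi:=D^2f_t/Df_t$, and $\eta_k:=\frac{(Df_t^k)'}{(Df_t^k)^2}=\sum_{i=0}^{k-1}\frac{\xi\circ f_t^i}{Df_t^{\,k-i}\circ f_t^i}$. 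The point of this normal form is that each summand is ``a function composed with $f_t^k$, divided by $Df_t^k$'', on which $\mathcal{L}_t^k$ acts by pulling the composed factor out.

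\emph{A uniform $BV$ bound, and the reduction.} Writing $\mathcal{L}_t^n(r_k)=\mathcal{L}_t^{\,n-k}\bigl(\mathcal{L}_t^k r_k\bigr)$ and using $\mathcal{L}_t^k\bigl((g\circ f_t^k)w\bigr)=g\,\mathcal{L}_t^k(w)$ together with the semigroup property, I would obtain
\[
\mathcal{L}_t^k(r_k)=\psi'\,\mathcal{L}_t^k(1)-\psi\,\mathcal{L}_t^k(\eta_k),\qquad \mathcal{L}_t^k(\eta_k)=\sum_{m=1}^{k}\mathcal{L}_t^m\!\Bigl(\frac{\xi}{Df_t^m}\,\mathcal{L}_t^{\,k-m}(1)\Bigr).
\]
The operator $w\mapsto\mathcal{L}_t^m(w/Df_t^m)$ is the weighted transfer operator of $f_t^m$ whose weight carries an extra factor bounded by $\lambda^{-m}$ relative to that of $\mathcal{L}_t^m$; the Lasota--Yorke machinery for weighted transfer operators (\cite{Bal2}) then gives, uniformly in $t$, an operator norm bound $\|\mathcal{L}_t^m(\,\cdot\,/Df_t^m)\|_{BV}\le C\gamma^m$ with $\gamma\in(0,1)$. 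Combined with $\|\mathcal{L}_t^j(1)\|_{BV}\le C$ this yields $\|\mathcal{L}_t^k(\eta_k)\|_{BV}\le C\sum_{m\ge1}\gamma^m$, hence $\|\mathcal{L}_t^k(r_k)\|_{BV}\le\bar C$ with $\bar C$ independent of $k$. Plugging this into the stable spectral gap \eqref{StableSpectralGap} and using $\int\mathcal{L}_t^k(r_k)=\int r_k$, one gets $\mathcal{L}_t^{\,n-k}(\mathcal{L}_t^k r_k)=\rho_t\int_0^1 r_k\,dx+O(\theta^{\,n-k})$, so summing over $k<n$,
\[
\mathcal{L}_t^n(R_{t,n})=\rho_t\!\int_0^1 R_{t,n}(x)\,dx+O(1)\quad\text{in }BV .
\]
Consequently $\int\phi(f_t^n)R_{t,n}\,dx=\bigl(\int\phi\,d\mu_t\bigr)\bigl(\int_0^1 R_{t,n}\,dx\bigr)+O(\|\phi\|_\infty)$, and the proposition is reduced to showing $\int_0^1 R_{t,n}\,dx=O(1)$ uniformly in $n$.

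\emph{The cohomological cancellation.} From $\int_0^1 R_{t,n}=\sum_{k<n}\int_0^1 r_k$ with $\int r_k=\int\psi'\mathcal{L}_t^k(1)-\int\psi\,\mathcal{L}_t^k(\eta_k)$, letting $k\to\infty$ (the convergences being exponential, by the same spectral‑gap argument) and using $\int(\psi\circ f_t^m)g=\int\psi\,\mathcal{L}_t^m g$ and $\psi'+\xi\psi=v_t'/Df_t$, I would identify the limit as
\[
\int r_k \longrightarrow \int W_t\,d\mu_t,\qquad W_t:=\frac{v_t'}{Df_t}-\frac{D^2f_t}{Df_t}\,S_t,\qquad S_t(x):=\sum_{m\ge0}\frac{v_t(f_t^m x)}{Df_t^{m+1}(x)},
\]
with $\int r_k=\int W_t\,d\mu_t+O(\kappa^k)$ for some $\kappa\in(0,1)$. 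It remains to check $\int W_t\,d\mu_t=0$. The function $S_t$ solves the infinitesimal‑conjugacy equation $Df_t\cdot S_t=v_t+S_t\circ f_t$; differentiating this identity in $x$ and dividing by $Df_t$ gives the pointwise relation
\[
S_t'-S_t'\circ f_t=\frac{v_t'}{Df_t}-\frac{D^2f_t}{Df_t}\,S_t=W_t ,
\]
whence $\int W_t\,d\mu_t=\int(S_t'-S_t'\circ f_t)\rho_t\,dx=\int S_t'\rho_t\,dx-\int S_t'\,\mathcal{L}_t(\rho_t)\,dx=0$ by $\mathcal{L}_t\rho_t=\rho_t$. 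Therefore $\int_0^1 R_{t,n}=\sum_{k<n}O(\kappa^k)=O(1)$, finishing the proof.

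\emph{Main obstacle.} The delicate part is the uniform‑in‑$k$ $BV$ bound for $\mathcal{L}_t^k(r_k)$: it hinges on the genuine contraction $O(\gamma^m)$, $\gamma<1$, of the ``second‑order'' weighted operators $w\mapsto\mathcal{L}_t^m(w/Df_t^m)$, which one must extract from the (uniform in $t$) Lasota--Yorke estimates of \cite{Bal2}, and on the fact that when $J_t(c)\ne0$ the function $S_t$ has to be treated in a generalized bounded‑variation space rather than in $BV$. Once that uniform control is in place, the scalar reduction is routine and the actual vanishing $\int W_t\,d\mu_t=0$ is the short conceptual step coming from the cohomology equation for $S_t$.
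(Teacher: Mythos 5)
Your reduction is sound as far as it goes and uses the same underlying mechanism as the paper: you write each $r_k$ as an exact derivative, push it through $\mathcal{L}_t^k$, and identify $\mathcal{L}_t^m(\,\cdot\,/Df_t^m)$ with the weighted operator $\mathcal{L}_{t,1}^m$, whose $O(\bar\lambda^{-m})$ contraction (quoted from \cite{CoDo} in the paper's own proof) gives the uniform bound $\|\mathcal{L}_t^k(r_k)\|_{BV}\le\bar C$. Combined with the stable spectral gap \eqref{StableSpectralGap} this correctly yields
$\mathcal{L}_t^n R_{t,n}=\rho_t\int_0^1 R_{t,n}\,dx+O(1)$ in $BV$, hence
$\int\phi(f_t^n)R_{t,n}\,dx=\bigl(\int\phi\,d\mu_t\bigr)\bigl(\int_0^1 R_{t,n}\,dx\bigr)+O(\|\phi\|_\infty)$. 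Up to this point your argument is a valid repackaging of the paper's two change-of-variables computations.

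The gap is in the last step, and it is not a technicality: the cohomological cancellation $\int W_t\,d\mu_t=\int\bigl(S_t'-S_t'\circ f_t\bigr)d\mu_t=0$ requires $S_t'$ to exist as an $L^1(\mu_t)$ function, i.e.\ $S_t$ must be at least absolutely continuous along orbits. But $S_t=\sum_{m\ge0}\frac{v_t\circ f_t^m}{Df_t^{m+1}}$ is precisely the ``infinitesimal conjugacy'' of Baladi--Smania, and it belongs to $BV$ (equivalently, the telescoping you invoke is legitimate) \emph{if and only if} the horizontality condition $J_t(c)=0$ holds. Under the paper's standing transversality hypothesis $|J_t(c)|>\eps_1$, the function $S_t$ has jumps at the dense set of preimages of $c$ whose total variation diverges, its distributional derivative carries a singular part, and there is no reason to expect $\int_0^1 R_{t,n}\,dx$ to stay bounded (one should expect it to grow like $n$ times $\int W_t\,d\mu_t$, which is generically nonzero). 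You flag this yourself in your ``main obstacle'' paragraph, but it is not a loose end that a generalized-BV space will repair: it is exactly where the transversality hypothesis obstructs a naive linear-response argument. The paper sidesteps the whole issue via Remark~\ref{AssumptionZeroMean}: one may assume $\int\phi\,d\mu_t=0$ from the outset, so the bulk term $\bigl(\int\phi\,d\mu_t\bigr)\bigl(\int R_{t,n}\bigr)$ in your reduction vanishes identically and no control on $\int R_{t,n}$ is needed. If you replace the cohomology step by that one observation, your proof closes and is correct.
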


\begin{remark}\label{AssumptionZeroMean}
  Note that if $\bar{\phi}=\phi - \int \phi d\mu_t$, then $\bar{\phi}$ is of zero mean with respect to $\mu_t$ (i.e. $\int \bar{\phi} d\mu_t=0$) and $\Gamma(t+h,\phi)-\Gamma(t,\phi)=\Gamma(t+h,\bar{\phi})-\Gamma(t,\bar{\phi})$. Therefore, without loss of generality, we will assume that $\phi$ is of zero mean from now on.
\end{remark}

\section{Shadowable points.}

\begin{proof}[Proof of Lemma \ref{lemmaBtn}]
(a) Let $x\in A_{h,n}$ and let $y_n(x)$ the corresponding $y$ according to the definition of $A_{h,n}$. $\;\;$Then, $\omega_{t+h,n}(x)=\omega_{t,n}(y_n(x))$.$\;\;$The latter condition is the same as saying that, given $0\leq k \leq n$, $f^k_{t+h}(x)$ and $f_t^k(y_n(x))$ are both in either $[0,c]$ or $[c,1]$.

\vspace{.3cm}

Observe that if $s \geq 1$ and $z \in A_{h,n}$ then by chain rule

\begin{equation}\label{chainrule}
z=y_s(z)+h\frac{J_s(y_s(z))}{Df_t(y_s(z))}+O(h^2).
\end{equation}

Thus, we could express $y_s(z)$ as

\begin{equation}\label{ExpressingY(z)}
y_s(z)+O(h^2)=z-h\frac{J_s(y_s(z))}{Df_t(y_s(z))}.
\end{equation}

Note that if $0\leq k\leq n$, then

$$ f_t^{n-k}(f_t^k(y_n(x)))=f_t^n(y_n(x))=f^n_{t+h}(x)=f^{n-k}_{t+h}(f^k_{t+h}x)$$

Hence, $y_{n-k}(f^k_{t+h}(x))=f_t^k(y_n(x))$ and using $\eqref{ExpressingY(z)}$, we have that

\begin{equation}\label{DescriptionOffky1}
f_t^k(y_n(x))+O(h^2)=f^k_{t+h}(x)-h\frac{J_{n-k}(f_t^k(y_n(x))}{Df_t(f_t^ky_n(x))}.
\end{equation}

\vspace{.3cm}




Since $x$ and $y_n(x)$ have the same itinerary under $f_{t+h}$ and $f_t$ respectively up to the $n-$iteration, $f_t^k(y_n(x))$ and $f_{t+h}^k(x)$ must be sufficiently far away from $c$ so that they can be in the same side, for each $0 \leq k \leq n$.  More precisely, in order to have that $f_t^ky_n(x)$ and $f_{t+h}^kx$ stay both in the same side, it is sufficient and necessary to require that

\begin{eqnarray}\label{ConditionLeft}
f_{t+h}^k(x)\notin [c+h\frac{J_{n-k}(c)}{Df_{L,t}(c)}+O(h^2),c]&,&\mbox{ if }J_{n-k}(c)\leq 0.
\end{eqnarray}

or

\begin{equation}\label{ConditionRight}
f_{t+h}^k(x)\notin [c,c-h\frac{J_{n-k}(c)}{Df_{L,t}(c)}+O(h^2)],\mbox{ if }J_{n-k}(c)>0.\\
\end{equation}

Indeed, suppose $J_{n-k}(c)\leq 0$ and $f_{t+h}^k(x) \in [c+h\frac{J_{n-k}(c)}{Df_{L,t}(c)}+O(h^2),c]$.$\;$

\vspace{.3cm}

Since $f^k_{t+h}(x)=c+O(h^2)$, $f_t^k(y_n(x))=c+O(h^2)$, and since $\frac{J_{n-k}(x)}{Df_t(x)}$ is left $C^1-$continuous, we can replace $\eqref{DescriptionOffky1}$ by

\begin{equation}\label{DescriptionOffky2}
f^k_t(y_n(x)) + O(h^2)=f_{t+h}^k(x)-h\frac{J_{n-k}(c)}{Df_{L,t}(c)}.
\end{equation}

Now, we are assuming that $f_{t+h}^k(x)$ is in $[c+h\frac{J_{n-k}(c)}{Df_{L,t}(c)}+O(h^2),c]$, so in particular, $$f_{t+h}^k(x)>c+h\frac{J_{n-k}(c)}{Df_{L,t}(c)}+O(h^2).$$

Using $\eqref{DescriptionOffky2}$, the above inequality implies that $f_t^k(y_n(x))>c$, that is, $f_{t+h}^k(x)$ and $f^k(y)$ are in different side so they have different itineraries.  $\;$Therefore, if we want them to lie in the same side we must require the condition $(\ref{ConditionLeft})$ as we claimed. $\;$The condition $(\ref{ConditionRight})$ is proved similarly.

\vspace{.3cm}

Therefore,

$$A_{h,n}=[0,1]\backslash \bigcup_{k=0}^{n} f_{t+h}^{-k}I_{n-k}$$ and thus we have proved $(\ref{complementAtn})$.

Let us prove that the Lebesgue measure of  $[0,1]\backslash \displaystyle \bigcup_{k=0}^{n} f_{t+h}^{-k}I_{n-k}$ is of order $O(hn)$.
We have

$$\bigg| \bigcup_{k=0}^{n-1} f_{t+h}^{-k}I_{n-k}\bigg| \leq \sum_{k=0}^{n-1}|f_{t+h}^{-k}I_{n-k}|
=\sum_{k=0}^{n-1} \int_{f_{t+h}^{-k}I_{n-k}}dx$$
$$=\sum_{k=0}^{n-1} \int_{I_{n-k}} \mathcal{L}_{t+h}^k(1)(x)dx=\sum_{k=1}^{n-1} \int_{I_{n-k}} (\rho_{t+h}(y)+O(\theta^k))(x)dx$$
$$\leq \sum_{k=0}^{n-1}|I_{n-k}|O(1)\leq \sum_{k=1}^{n-1}O( h J_{n-k}(c))+O(h^2)$$
$$= O\bigg(h \sum_{k=0}^{n-1} J_{n-k}(c)\bigg)+O(h^2n)\leq O(hn)$$

(note that we used  \eqref{StableSpectralGap}).
Therefore, we conclude that

\begin{equation}\label{measurecomplement}
\bigg|\displaystyle\bigcup_{k=0}^{n} f_{t+h}^{-k}I_{n-k}\bigg|=O(hn)
\end{equation}

Similarly, we prove (b).

\qedhere

\end{proof}

\section{Changing variables}

\begin{proof}[Proof of Lemma \ref{LmChVar}]

By definition of $A_{h,n}$, given $x\in A_{h,n}$, there is $y=y_n(x)$ such that $f_{t+h}^n(x)=f_t^n(y_n(x))$.

Since $\frac{\partial y_n}{\partial x} = \displaystyle \prod_{k=0}^{n-1} \frac{\partial y_{k+1}}{\partial y_k}$ (note that $y_0=x$),
we analyze the factors of this product.
We have

$$
\frac{\partial y_n}{\partial x} = \prod_{k=0}^{n-1}(1+h\beta_k)
= 1+h\sum_{k=0}^{n-1} \beta_k +O(h^2 n^2)$$
where $\beta_k(y)=\displaystyle \sum_{k=0}^{n-1}\bigg\{ \frac{v_t'(f_t^ky)}{Df_t(f_t^ky)}-\frac{v_t(f_t^ky)}{Df_t(f_t^ky)} \sum_{j=0}^k \frac{\xi(f_t^jy)}{Df_t^{k-j}(f_t^jy)} \bigg\}.$
Hence,

$$\bigg(\frac{\partial y_n}{\partial x} \bigg)^{-1}
= 1-h\sum_{k=0}^{n-1}\beta_k +O(h^2n^2) \qedhere$$
\end{proof}

\section{Contribution of shadowable points}
\begin{proof}[Proof of Lemma \ref{IntegralRn}]

Let us write the integral as

\begin{eqnarray*}
\int \phi(f_t^ny)R_{t,n}(y)\;dy &=& \int \phi(f_t^ny)\bigg(\sum_{k=0}^{n-1}\bigg\{ \underbrace{\frac{v'_t(f_t^ky)}{Df_t(f_t^ky)}}_{(I)}-\underbrace{\frac{v_t(f_t^ky)}{Df_t(f_t^ky)} \sum_{j=0}^k \frac{\xi(f_t^jy)}{Df_t^{k-j}(f_t^jy)} }_{(II)}\bigg\}\bigg)\;dy.\\
\end{eqnarray*}

Let us start by analyzing $(I)$. $\;$Making the change of variables $z=f_t^{n-j}y$ and $w=f^jz$, we get

\begin{eqnarray*}
\sum_{k=0}^{n-1} \int \phi(f_t^ny)\frac{v'_t(f_t^ky)}{Df_t(f_t^ky)}\;dy &=& \sum_{j=1}^{n} \int \phi(f_t^ny)\frac{v'_t(f_t^{n-j}y)}{Df_t(f_t^{n-j}y)}\;dy\\
&=&\sum_{j=1}^{n} \int \phi(f_t^{j}z)\frac{v'_t(z)}{Df_t(z)} \mathcal{L}_t^{n-j}(1)(z)\;dz\\
&=&\sum_{j=1}^{n} \int \phi(w)\mathcal{L}_t^{j}(\frac{v'_t}{Df_t}\mathcal{L}_t^k(1))(w)\;dw.
\end{eqnarray*}

By Remark \ref{AssumptionZeroMean} and \eqref{StableSpectralGap}, we have that

\begin{eqnarray}
\bigg|\sum_{j=1}^{n} \int \phi(w)\mathcal{L}_t^{j}(\psi_t )(w)\;dw\bigg| &\leq& \sum_{j=1}^{n} O(\theta^j \|\frac{v'_t}{Df_t}\mathcal{L}_t^k(1)\|_{BV})\\
&\leq& \sum_{j=1}^{n} O(\theta^j \|\frac{v'_t}{Df_t}\|_{BV}\|\mathcal{L}_t^k(1)\|_{BV}).
\end{eqnarray}

By \eqref{StableSpectralGap}, $\|\mathcal{L}_t^k(1)\|_{BV}$ is bounded by $1+\|\rho_t\|_{BV}$.  Since $\sum_{j=1}^{n} \theta^j$ is bounded by $1/(1-\theta)$, it follows that $|\sum_{k=0}^{n-1} \int \phi(f_t^ny)\frac{v'_t(f_t^ky)}{Df_t(f_t^ky)}\;dy |$ is bounded by a constant not depending on $n$.\\

Now, let us study $(II)$.  $\;$ Making the change of variables $z=f_t^jy$ and $w=f_t^{k-j}(z)$, we obtain that

\begin{eqnarray*}
\int \sum_{k=0}^{n-1}\phi(f_t^ny)\frac{v_t(f_t^ky)}{Df_t(f_t^ky)}\sum_{j=0}^k \frac{\xi(f_t^jy)}{Df_t^{k-j}(f_t^jy)}&=& \sum_{k=0}^{n-1}\sum_{j=0}^{k} \int \frac{\phi(f_t^ny)v_t(f_t^{k-j}(f_t^jy)) \xi(f_t^jy) }{Df_t(f_t^{k-j}(f_t^jy))Df_t^{k-j}(f_t^jy)} dy\\
&=& \sum_{k=0}^{n-1} \sum_{j=0}^k  \int \frac{\phi(f_t^{n-j}z)v_t(f_t^{k-j}(z))\xi(z) }{Df_t(f_t^{k-j}(z))Df_t^{k-j}(z)} \mathcal{L}_t^j(1)(z)dz\\
&=&\sum_{k=0}^{n-1} \sum_{j=0}^k  \int \frac{\phi(f_t^{n-k}w)v_t(w) }{Df_t(w)} \mathcal{L}_{t,1}^{k-j}( \xi\cdot\mathcal{L}_t^j(1))(w)\;dw \\
&=& \sum_{k=0}^{n-1} \sum_{j=0}^k \int \phi(q)\mathcal{L}_t^{n-k}\bigg( \frac{v_t}{Df_t} \mathcal{L}_{t,1}^{k-j}( \xi\cdot\mathcal{L}_t^j(1)) \bigg)dq \\
\end{eqnarray*}

where $\mathcal{L}_{t,1}^i(\varphi)(w)=\sum_{f_t^i(z)=w} \frac{\varphi(z)}{Df_t^i(z)|Df_t^i(z)|}$.\\

By \cite{CoDo}, there exists $\bar{\lambda}>1$ such that

\begin{equation}\label{BVnorm}
\| \mathcal{L}_{t,1}^i h\|_{BV} =O(\bar{\lambda}^{-i}\|h\|_{BV}).
\end{equation}

Then, by Remark \ref{AssumptionZeroMean} and \eqref{StableSpectralGap}, we have that

\begin{eqnarray*}
\bigg| \sum_{k=0}^{n-1} \sum_{j=0}^k \int \phi(q)\mathcal{L}_t^{n-k}\bigg( \frac{v_t}{Df_t} \mathcal{L}_{t,1}^{k-j}( \xi\cdot\mathcal{L}_t^j(1)) \bigg)dq\bigg| &\leq& \sum_{k=0}^{n-1} \sum_{j=0}^k O(\theta^{n-k} \|\frac{v_t}{Df_t}\|_{BV}\| \mathcal{L}_{t,1}^{k-j}( \xi\cdot\mathcal{L}_t^j(1))\|_{BV}).\\
&\leq& \sum_{k=0}^{n-1} \sum_{j=0}^k O(\theta^{n-k} \bar{\lambda}^{-(k-j)} \|\mathcal{L}_t^j(1)\|_{BV} \|\frac{v_t}{Df_t}\|_{BV}\|\|\xi\|_{BV}).
\end{eqnarray*}

As observed before, $\|\mathcal{L}_t^j(1)\|_{BV}$ is bounded and since $\sum_{k=0}^{n-1} \sum_{j=0}^k \theta^{n-k} \bar{\lambda}^{-(k-j)}$ is bounded above, it follows that $|\int \sum_{k=0}^{n-1}\phi(f_t^ny)\frac{v_t(f_t^ky)}{Df_t(f_t^ky)}\sum_{j=0}^k \frac{\xi(f_t^jy)}{Df_t^{k-j}(f_t^jy)}|$ is bounded by a constant not depending on n.\\

Therefore, the integral $\int \phi(f_t^ny)R_{t,n}(y)\;dy$ is bounded by a constant that does not depend on $n$ as claimed.

\end{proof}

\section{Modulus of continuity of $\Gamma(t)$}

\label{ScIMNonSmooth}

\subsection{Preliminary Results}
We have already decompose the phase space in order to write $\Gamma(t+h)-\Gamma(t)$ as 

$$\Gamma(t+h)-\Gamma(t)=h\int \phi(f_t^n(y)) R_{t,n}(y)dy + \int_{[0,1]\backslash A_{h,n}} \phi(f_{t+h}^nx)dx-\int_{[0,1]\backslash B_{h,n}} \phi(f_t^ny)dy+O(h^2n^2).$$

Lemma \ref{IntegralRn} gives us a control over the integral $\int \phi(f_t^n(y)) R_{t,n}(y)dy$, so we need to still analyze the difference $\int_{[0,1]\backslash A_{h,n}} \phi(f_{t+h}^nx)dx-\int_{[0,1]\backslash B_{h,n}} \phi(f_t^ny)dy$. The idea to do this will be to estimate this difference by integrals over a set nearby the critical point in order to prove Theorem \ref{MainResult}.  To achieve this, we will need a couple of preliminary results.\\

Define $c_j(t)=f^j_t(c)$. Then, we start with this proposition about recurrence of points in the orbit of $c$.

\begin{proposition}
    There exists $m >1$ such that for almost all $t$ in a small interval around 0

    \begin{equation}\label{recurrence}
    |c_j(t)-c| > j^{-m}
    \end{equation} if $j$ is sufficiently large.

\end{proposition}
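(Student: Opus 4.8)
The statement is a Borel--Cantelli type parameter exclusion, powered by the transversality hypothesis \eqref{TransversalCondition}. Write $I$ for the small parameter interval in question, fix $m>1$, and set $E_j=\{t\in I:|c_j(t)-c|\le j^{-m}\}$. It suffices to prove a bound $|E_j|\le C\,j^{-m}$ with $C$ independent of $j$: then $\sum_j|E_j|<\infty$, so $|\limsup_j E_j|=0$ by Borel--Cantelli, and for a.e.\ $t\in I$ one has $t\notin E_j$ for all large $j$, which is \eqref{recurrence}.

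The first step is a lower bound for the speed of $c_j$ in $t$. Differentiating $c_j(t)=f_t^j(c)$ in $t$ and factoring out $Df_t^{j-1}(f_t(c))$ gives, wherever the derivative exists,
$$\frac{d}{dt}\,c_j(t)=Df_t^{j-1}(f_t(c))\cdot S_j(t),\qquad S_j(t)=\sum_{i=0}^{j-1}\frac{v_t(f_t^i(c))}{Df_t^i(f_t(c))}.$$
The sum $S_j(t)$ is exactly the partial sum of the series defining $J_t(c)$ in \eqref{TransversalCondition}; since $|Df_t^i(f_t(c))|\ge\lambda^i$ and $v_t$ is bounded, the tail is geometrically small uniformly in $t$, so $|S_j(t)|\ge\eps_1/2$ for every $t$ and every $j\ge j_0$. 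Together with $|Df_t^{j-1}(f_t(c))|\ge\lambda^{j-1}$ this gives $|\frac{d}{dt}c_j(t)|\ge\frac{\eps_1}{2}\lambda^{j-1}$ for $j\ge j_0$, valid on each maximal subinterval of $I$ on which the orbit segment $c,f_t(c),\dots,f_t^{j-1}(c)$ stays away from $c$; on such a subinterval the branch of $f_t$ used at every step is locally constant in $t$, so $c_j$ is $C^2$ there. The parameters excluded by this restriction are those for which $c$ is periodic, a null set by assumption $(1)$.

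On any one such monotone $C^2$ branch, $c_j$ is a diffeomorphism onto its image with $|c_j'|\ge\frac{\eps_1}{2}\lambda^{j-1}$, so it pulls the interval $[c-j^{-m},c+j^{-m}]$ back to an interval of length at most $4j^{-m}/(\eps_1\lambda^{j-1})$. To assemble these over all branches I would work with the push-forward measure $\nu_j=(c_j)_*(\mathrm{Leb}|_I)$, so that $|E_j|=\nu_j([c-j^{-m},c+j^{-m}])$, and estimate its density $h_j(s)=\sum_{c_j(t)=s}|c_j'(t)|^{-1}$ near $s=c$. The key ingredient is a bounded-distortion estimate for the parameter map $t\mapsto c_j(t)$: using the $C^2$ bounds on $f_{*,t}$ and $\lambda>1$, on each monotone branch $\log|c_j'(t)/c_j'(t')|$ is controlled by a geometric sum plus a term of size $O(j\lambda^{-j})$, hence by a constant uniform in $j$, so on each branch $|c_j'|^{-1}$ is comparable to (branch length)/(image length). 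Combined with the uniform transfer-operator estimate \eqref{StableSpectralGap} for the underlying maps, this should bound the number and sizes of the branches whose image meets a neighbourhood of $c$, yielding $h_j\le C$ near $c$ uniformly in $j$, and hence $|E_j|\le Cj^{-m}$.

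The main obstacle will be precisely this distortion-and-counting step. The monotone branches of $t\mapsto c_j(t)$ may be numerous --- a priori exponentially many --- their images are in general proper subintervals of $[0,1]$, and a short-image branch lying over $c$ contributes disproportionately to $h_j(c)$; unwinding the definitions, such a branch forces a parameter at which $f^\ell(c)=c$ for some $\ell<j$ while $f^{j-\ell}(c)$ is close to $c$, a condition of the same shape as \eqref{recurrence} at a smaller time. I would close the estimate by an induction on the number of close returns of the critical orbit to $c$ (a binding / free-period decomposition), with the speed bound of the first step governing the expansion during free periods and \eqref{StableSpectralGap} governing the distribution of the branches created by close returns. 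Making all of this uniform in $j$, so that the $\lambda^{j-1}$ expansion genuinely dominates, is the technical heart; once it is in place any $m>1$ suffices, and one may shrink $I$ around the base parameter to keep the constants under control.
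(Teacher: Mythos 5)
Your framework --- reduce \eqref{recurrence} via Borel--Cantelli to a measure bound $\mathrm{Leb}\{t:|c_j(t)-c|\le j^{-m}\}\le C j^{-m}$, obtained from the transversality-driven speed estimate $|\tfrac{d}{dt}c_j(t)|=|Df_t^{j-1}(f_t(c))|\cdot|S_j(t)|\ge \tfrac{\eps_1}{2}\lambda^{j-1}$ on each monotone branch of $t\mapsto c_j(t)$ --- is the same outline as the paper's, and the derivative identity and resulting speed bound are correct. The genuine gap is exactly where you flag it: you need a uniform upper bound on the density $h_j$ of $(c_j)_*\mathrm{Leb}|_I$ near $c$, and you only gesture at a proof (``induction on the number of close returns'') without specifying the inductive quantity or the recursion. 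The difficulty is not a technicality. A monotone branch of $c_j$ with a short image landing over $c$ contributes $|c_j'|^{-1}\approx(\text{branch length})/(\text{image length})$, which can be large, and the only thing controlling how many and how short such branches sit over $c$ is the same type of estimate at an earlier time; without an explicit self-referential quantity the argument is circular.

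The paper closes precisely this gap. It introduces $\Gamma_n(t)=d\bigl(c_n(t),\partial W_n(t)\bigr)$, where $W_n(t)$ is the image of the monotone component of $t$ under $s\mapsto c_n(s)$, and $Z_n=\sup_{0<\epsilon<1}\mathrm{Leb}(\{\Gamma_n<\epsilon\})/\epsilon$, and then proves a self-improving recursion $Z_{n+k_0}\le\vartheta Z_n+M$ with $\vartheta<1$, which gives $Z_n\le\widetilde K$ uniformly. The two cases in that recursion are exactly your two kinds of branches: components of $W_n$ of length $<\widetilde\delta$ are handled by reapplying the $Z_n$ bound at time $n$ (the contracting term $\vartheta Z_n$), while components of length $\ge\widetilde\delta$ split into at most $2^{k_0}$ pieces and are estimated directly via bounded distortion (the additive $M$). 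Taking $\epsilon=n^{-m}$ then gives $\mathrm{Leb}\{|c_n-c|<n^{-m}\}\le\widetilde K n^{-m}$, which is the bound $|E_j|\le Cj^{-m}$ you were after. Your strategy points in the right direction, but this recursive quantity and the inequality $Z_{n+k_0}\le\vartheta Z_n+M$ are the missing ingredient that makes it go through.
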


\begin{proof}
By regularity of $f^n_t$ on $t$, $c_n(t)$ is of bounded variation. Let $C$ be the constant that bounds the quotient of derivatives with respect to $t$ of $c_n(t)$.

Take $k_0$ such that $\Lambda^{k_0} > 2C$.

Assume first that

\begin{equation}
c_k(t)\neq c,
\end{equation} for all $t \in I$ and all $k \leq k_0$. $\;$Define

 $$w_{n}(t) = \{ s\; :\; c_{j}(t) \mbox{ and } c_{j}(s) \mbox{ have the same itinerary for } j \leq n \},$$

$$W_{n}(t) = \{c_{n}(s)\}_{s \in w_{n}}, \mbox{ and }$$

$$\Gamma_{n}(t) = d\left( c_{n}(t), \partial W_{n}(t) \right)$$

Let us also define $Z_{n} = \sup_{0<\epsilon<1} \frac{Leb(\{t:\Gamma_{n}(t)<\epsilon\})}{\epsilon},$ where $\mbox{Leb}(\cdot)$ denotes the Lebesgue measure.

We claim that there exists $\widetilde{K}$ such that

\begin{equation}\label{Zn}
Z_{n} <  \widetilde{K}.
\end{equation}

Assuming (\ref{Zn})
take $\epsilon = ṇ^{-m}$, for $n>1$ and $m>1$, then

\begin{eqnarray*}
\mbox{Leb}(t: |c_{n}(t) - c| < n^{-m})&\leq& \mbox{Leb}(\{t: |\Gamma_{n}(t)| < n^{-m}\})\\
&\leq& \widetilde{K} n^{-m}
\end{eqnarray*}

Then, $\mbox{Leb}(\{t: |c_{n}(t) - c| < n^{-m}\}) \leq \widetilde{K} n^{-m}$, which implies that

$$\sum^{\infty}_{n=1} \mbox{Leb}(\{t: |c_{n}(t) - c| < n^{-m}\}) < \infty $$

Therefore, we can by Borel-Cantelly lemma, there exist $n_0$ such that for all $n \geq n_0$

$\mbox{Leb}(\{t: |c_{n}(t) - c| < n^{-m}\})= 1$ as we want.

Hence, we need to prove (\ref{Zn}).

In fact, we prove that there exist $n_{0}>1$, $\vartheta < 1$ and $M>0$ such that

\begin{equation}\label{Znk}
Z_{n+k} \leq Z_{n}\vartheta + M.
\end{equation}

This will certainly imply (\ref{Zn}). In order to prove (\ref{Znk}), let us pick $\widetilde{\delta} << 1$. Then, we will analyze two cases\newline

(1) The components of $W_n$ that have measure less than $\widetilde{\delta}$.

(2) The components of $W_n$ that have measure greater than $\widetilde{\delta}$.

Let us work in the first case and let $V_n$ be a component of $W_n$ such that $|V_n| < \widetilde{\delta}$ and let $v_n$ the component of $\omega_n$ associated to $V_n$.$\;$Then, $f^{k_0}$ maps $V_n$ into, at most, two intervals contained in $W_{n+k_0}=\bigcup_{t}W_{n+k_0}(t).$ If $V_n$ is split then $V_n$ passes trough $c$ at some point.
Note that we cannot have more than two intervals because of (\ref{recurrence}).
Suppose we have two intervals. Let us call them $V'_{n+k_0}$ and $V''_{n+k_0}$.

\vspace{.3cm}

Take $\epsilon>0$ and note that by the expansivity of $f^{k_0}$, we have that $\mbox{Leb}(\{t\;:\;c_{n+k_0}(t)\in V'_{n+k_0}\cup V''_{n+k_0} \mbox{ and } \Gamma_{n+k_0}(t)<\epsilon \})$ is less or equal than $\mbox{Leb}(\{t\;:\;d(c_n(t),a)\leq \frac{\epsilon}{\lambda^{k_0}} \mbox{ or } \Gamma_n(t) \leq \frac{\epsilon}{\lambda^{k_0}}\}),$ where $a$ is the point where $V_n$ reaches $c$ at some point, this is, $f^j_{s_0}(a)=c$ for some $j\leq k_0$ and $s_0 \in V_n$.$\;$ By bounded distortion, the measure of $\{t\;:\;d(c_n(t),a)\leq \frac{\epsilon}{\lambda^{k_0}} \} $ is comparable to $\{t\in v_n : \Gamma_n(t) \leq \frac{\epsilon}{\lambda^{k_0}}\}$, then

\vspace{.3cm}
$ \mbox{Leb}(\{t\;:\;c_{n+k_0}(t)\in V'_{n+k_0}\cup V''_{n+k_0} \mbox{ and } \Gamma_{n+k_0}(t)<\epsilon \}) \leq$
\begin{eqnarray*}
&\;&\hspace{3cm}\leq \mbox{Leb}(\{t\;:\;d(c_n(t),a)\leq \frac{\epsilon}{\lambda^{k_0}} \mbox{ or } \Gamma_n(t)\leq \frac{\epsilon}{\lambda^{k_0}}\})\\
&\;&\hspace{3cm}\leq 2C\mbox{Leb}(\{t \;:\; \Gamma_n(t)\leq \frac{\epsilon}{\lambda^{k_0}}\})\\
\end{eqnarray*}

By summing over all components of $W_n$ with measure less than $\widetilde{\delta}$, we have that

$$\mbox{Leb}(\{t\;:\;c_{n+k_0}(t)\in V'_{n+k_0}\cup V''_{n+k_0} \mbox{ and } \Gamma_{n+k_0}(t)<\epsilon \}) \leq \frac{2C\epsilon}{\lambda^{k_0}}Z_n,$$ (note that we use the definition of $Z_n$ as supremum). $\;$This suggest to take $\vartheta=\frac{2C\epsilon}{\lambda^{k_0}}<\epsilon$ .

Now, let us analyze the case when the components have measure greater than $\widetilde{\delta}$.$\;$In fact, the idea is the same but we have that if $\widetilde{V}_n$ is component with measure greater or equal than $\delta$, then $f^{k_0} (\widetilde{V}_n)$ will split in at most $2^{k_0}$ components inside $W_{n+k_0}.$
Call $a_1,a_2,\dots,a_{2^{k_0-1}}$ the points that visit $c.$
Arguing as in the first case
the first case we see that
the measure of $\{t\;:\; d(c_n(t),a_i)\leq \frac{\epsilon}{\lambda^{k_0}} \}$ is comparable to
$$\mbox{Leb}(\{t\;:\; \Gamma_n(t) \leq \frac{\epsilon}{\lambda^{k_0}} \}).$$
Therefore
$$
\mbox{Leb}(\{t\;:\;c_{n+k_0}(t)\in \widetilde{V}_{n+k_0,1}\cup \cdots \widetilde{V}_{n+k_0,2^{k_0}} \mbox{ and } \Gamma_{n+k_0}(t)<\epsilon \}) $$
$$\leq \frac{2^{k_0}C\epsilon}{\lambda^{k_0} \delta} \mbox{Leb}(\{t: c_n(t)\in \widetilde{V}_n\}. $$
Summing over components we get
$$
\mbox{Leb}(\{t\;:\;\Gamma_{n+k_0}(t)<\epsilon \text{ and } c_n(t) \text{ is in a long component}\}
\leq M\eps  $$
where $ M=\frac{2^{k_0}C\epsilon}{\lambda^{k_0} \delta}.$

Combining the two cases we get
$$Z_{n+k_0} \leq Z_n\vartheta+M $$
as claimed.

\end{proof}

 Recalling that $t \in [a,b]$ is fixed and $h>0$ is such that $t+h \in [a,b]$,  define $\bar{I}_h=[c-hJ_t(c),c+hJ_t(c)]$.  Also, define $n_1=n_1(h)$ such that there exists $s_1 \in [t,t+h]$ so that

    $$f_{s_1} ^{-n_1}\bar{I}_h \cap \bar{I}_h \neq \emptyset,$$ and

    $$f_{s}^{-n}\bar{I}_h \cap \bar{I}_h = \emptyset,$$ for all $n< n_1$ and for all $s \in [t,t+h]$.

\begin{lemma}\label{lemmaboundeddistortion}
    For all $s_1,s_2 \in [t,t+h]$ and for all $n \leq n_1$

    \begin{equation} \label{boundeddistortion}
    \frac{1}{C} \leq  \frac{|f_{s_1}^n(\bar{I}_h) |}{|f_{s_2}^n(\bar{I}_h) |}  \leq C.
    \end{equation} and

    \begin{equation}\label{measurefs1}
    \frac{1}{\widetilde{C}} \leq \frac{|f_{s_1}^n(\bar{I}_h) |}{|c_n(t+h)-c_n(s_1) | }  \leq \widetilde{C}.
     \end{equation}

\end{lemma}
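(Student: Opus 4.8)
The plan is to prove both estimates by exploiting that, on the time window $[0,n_1]$, the interval $\bar I_h$ and all its forward images stay uniformly away from the critical point, so that the restriction of $f_s^n$ to $\bar I_h$ is a genuine diffeomorphism onto its image with \emph{uniformly bounded distortion}, and the whole orbit segment varies in a $C^1$-controlled way as $s$ ranges over the short interval $[t,t+h]$. First I would make precise the ``away from $c$'' statement: by the very definition of $n_1$, for every $s\in[t,t+h]$ and every $n<n_1$ the set $f_s^{-n}\bar I_h$ misses $\bar I_h$, hence $f_s^n$ has no critical value inside $\bar I_h$ for those $n$, and consequently each branch of $f_s^{-k}$, $0\le k\le n$, that we follow along the orbit of $\bar I_h$ is a single smooth branch with $|Df_s|\ge\lambda$ (assumption \eqref{Lambda}). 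Combined with the $C^2$ bound on $f_{*,t}$, the standard one-dimensional bounded-distortion argument (summing $\sum_k |f_s^k(\bar I_h)|\,\|\xi\|_\infty\le\|\xi\|_\infty\sum_k |f_s^k(\bar I_h)|$, and using that the images $f_s^k(\bar I_h)$ for $k\le n\le n_1$ are pairwise disjoint by minimality of $n_1$, hence have total length $\le 1$) gives a constant $C_0\ge 1$, independent of $h$, $s$ and $n\le n_1$, with
$$
\frac1{C_0}\le \frac{|Df_s^n(x)|}{|Df_s^n(y)|}\le C_0\qquad\text{for all }x,y\in\bar I_h .
$$

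Next I would upgrade this to comparison of $f_{s_1}^n$ and $f_{s_2}^n$ for two different parameters $s_1,s_2\in[t,t+h]$. Write $f_{s_2}^n = f_{s_2}^n\circ f_{s_1}^{-n}\circ f_{s_1}^n$ and note that $g:=f_{s_2}^n\circ f_{s_1}^{-n}$ is defined and $C^1$ on a neighbourhood of $f_{s_1}^n(\bar I_h)$ with derivative $1+O(h\,n)=1+O(h\log(1/h))$ (this is exactly the chain-rule expansion of Lemma \ref{LmChVar}, applied branch-by-branch along the common itinerary that $\bar I_h$ has under $f_{s_1}$ and $f_{s_2}$ for $n\le n_1$); since $n\le n_1$ and $n_1 h\to 0$ — which I would note follows because $\bar I_h$ has length $O(h)$ and returns only after a time comparable to $\log(1/h)$ — the factor $1+O(h\log(1/h))$ is bounded above and below by absolute constants. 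Hence $|f_{s_2}^n(\bar I_h)| = \int_{f_{s_1}^n(\bar I_h)}|Dg|\asymp |f_{s_1}^n(\bar I_h)|$, which is \eqref{boundeddistortion} with a possibly enlarged constant $C$.

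For \eqref{measurefs1} I would localize at the endpoint: $c_n(t+h)-c_n(s_1)=f_{t+h}^n(c)-f_{s_1}^n(c)$, and both $c$-orbits stay inside the tube $\bigcup_{k\le n}f_\bullet^k(\bar I_h)$ away from $c$ (because $c\in\bar I_h$ and $n\le n_1$), so the mean value theorem along with the bounded distortion above yields $|c_n(t+h)-c_n(s_1)|\asymp \ell\cdot|Df_{s_1}^n(c)|$, where $\ell=|f_{t+h}(c)-f_{s_1}(c)|+\cdots$ accumulates the parameter displacement; a second application of the chain-rule expansion identifies $\ell$ up to a bounded factor with $|\bar I_h|=2h|J_t(c)|$, while $|f_{s_1}^n(\bar I_h)|\asymp |\bar I_h|\cdot|Df_{s_1}^n(c)|$ by distortion again. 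Dividing gives the two-sided bound \eqref{measurefs1}.

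The main obstacle is the second paragraph: one must be genuinely careful that $n\le n_1$ forces $n_1 h$ (equivalently $h\log(1/h)$) to be small \emph{uniformly in $t$}, so that the error terms $O(h^2 n^2)$ and $O(hn)$ coming from Lemmas \ref{lemmaBtn} and \ref{LmChVar} are $o(1)$ and the multiplicative constants in $g=1+O(hn)$ really are absolute; this in turn rests on the recurrence estimate \eqref{recurrence} for the critical orbit (to lower-bound the first return time $n_1$ only \emph{after} it is large, and to guarantee that the ``at most two/$2^{k_0}$ components'' bookkeeping does not degenerate), so the delicate point is patching together the transversality-driven lower bound $n_1\gtrsim\log(1/h)$ with an upper bound on $n_1$ that keeps $h n_1\to0$. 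Everything else is the classical bounded-distortion toolkit for piecewise expanding maps.
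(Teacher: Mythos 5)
Your overall strategy---spatial bounded distortion plus a cross-parameter comparison---is the right kind of argument, but two of the key steps as written contain genuine errors, and the fix for the second one is precisely what the paper's proof supplies.

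First, the claim that ``the images $f_s^k(\bar I_h)$ for $k\le n\le n_1$ are pairwise disjoint by minimality of $n_1$'' does not follow. The definition of $n_1$ only gives $f_s^{-k}\bar I_h\cap \bar I_h=\emptyset$ for $k<n_1$, i.e.\ the orbit of $\bar I_h$ does not \emph{return to $\bar I_h$}; it does not prevent $f_s^j(\bar I_h)$ and $f_s^k(\bar I_h)$ from overlapping somewhere else. (From $z\in f_s^j(\bar I_h)\cap f_s^k(\bar I_h)$ one gets $a\in\bar I_h$ and $f_s^{k-j}(b)$ with the same $f_s^j$-image, and these can be two \emph{different} preimages of $z$ under $f_s^j$ --- no contradiction with the definition of $n_1$.) The conclusion you need, $\sum_{k<n}|f_s^k(\bar I_h)|\le C$, is still true, but the right reason is the backward chain rule: $|f_s^k(\bar I_h)|\le\lambda^{-(n-k)}|f_s^n(\bar I_h)|\le\lambda^{-(n-k)}$, giving a convergent geometric series. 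What the definition of $n_1$ \emph{does} give, and what you do correctly, is that the forward orbit of $\bar I_h$ avoids $c$ for $k<n_1$, so $f_s^n|_{\bar I_h}$ is a single smooth branch.

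Second, the estimate $Dg=1+O(hn)$ for $g=f_{s_2}^n\circ f_{s_1}^{-n}$ is both a misattribution of Lemma \ref{LmChVar} and quantitatively wrong. Lemma \ref{LmChVar} controls the Jacobian $dx/dy$ of the \emph{shadowing} map ($y=f_t^{-n}f_{t+h}^n(x)$, comparing derivatives at two different points $x$ and $y$), whereas $\log Dg(z)=\sum_{k=0}^{n-1}\bigl[\log Df_{s_2}(f_{s_2}^k x)-\log Df_{s_1}(f_{s_1}^k x)\bigr]$ compares the two dynamics at the \emph{same} initial $x$. The point separation $|f_{s_2}^k x-f_{s_1}^k x|$ grows like $h\,|Df_s^k(x)|$, so the terms in the sum are not $O(h)$ uniformly; the sum is $O(1)$ only after a telescoping argument which brings in the transversality (so that the exponentially growing differences are dominated by the last term, which is comparable to $|f_s^n(\bar I_h)|/J_t(c)\le 1/\eps_1$). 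Saying the product is $1+O(h\log(1/h))\to 1$ is false: the distortion constant $C$ in \eqref{boundeddistortion} genuinely stays bounded away from $1$. The paper carries out exactly this telescoping, but along the critical orbit rather than at a generic $x$: it introduces the Euclidean distance $d_n=\sqrt{(s_1-s_2)^2+(c_n(s_1)-c_n(s_2))^2}$ in the $(s,c_n(s))$-plane, proves the geometric growth $d_{n+1}\ge(\lambda-\delta)d_n$ using the identity $\partial_s c_n(\tilde s)=J_n(c(\tilde s))\,Df^n_{\tilde s}(c_1(\tilde s))$ and the convergence $J_n\to J_t(c)>\eps_1$, and then bounds $\log\bigl(|Df^n_{s_1}(c)|/|Df^n_{s_2}(c)|\bigr)\le P\sum_{k<n}d_k\le\widetilde P\,d_n\le\hat C$. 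This is the precise mechanism your $Dg$ argument is reaching for; without it, the step where you pass from $f_{s_1}^n$ to $f_{s_2}^n$ is not justified.
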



\begin{lemma}\label{lemmaunionsum}
$$\bigg|\int_{\bigcup_{k=1}^n f_{t+h}^{-k}(\bar{I}_h)} \psi(x)\;dx - \sum_{k=1}^n \int_{f_{t+h}^{-k}(\bar{I}_h)} \psi(x)\;dx\bigg| \leq \|\psi\|_{\infty} \sum_{k_1,k_2}^n |L_{k_1} \cap L_{k_2}|.$$
\end{lemma}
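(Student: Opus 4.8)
The plan is to reduce the statement to the elementary pointwise inequality relating the indicator of a union to the sum of the indicators of its members. Write $L_k := f_{t+h}^{-k}(\bar I_h)$ for $1 \le k \le n$, so that each $L_k$ is a finite union of intervals (a preimage of an interval under the piecewise monotone map $f_{t+h}^k$), hence measurable, and the claim becomes
$$\bigg| \int \psi\,\mathbf{1}_{\bigcup_k L_k}\,dx - \int \psi \sum_{k=1}^n \mathbf{1}_{L_k}\,dx\bigg| \le \|\psi\|_{\infty} \sum_{k_1,k_2}^n |L_{k_1}\cap L_{k_2}|.$$

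First I would fix $x \in [0,1]$ and set $m = m(x) := \#\{k : x \in L_k\}$. Then $\sum_{k} \mathbf{1}_{L_k}(x) = m$ and $\mathbf{1}_{\bigcup_k L_k}(x) = \min(m,1)$, so the integrand difference $g(x) := \sum_{k} \mathbf{1}_{L_k}(x) - \mathbf{1}_{\bigcup_k L_k}(x)$ equals $\max(m-1,0) \ge 0$. On the other hand $\sum_{k_1 < k_2} \mathbf{1}_{L_{k_1}\cap L_{k_2}}(x) = \binom{m}{2} = \tfrac{m(m-1)}{2}$. Since $\tfrac{m(m-1)}{2} \ge m-1$ for every integer $m \ge 1$ (both sides vanish when $m = 0$ or $m = 1$, and for $m \ge 2$ it is equivalent to $m \ge 2$), I obtain the pointwise bound
$$0 \le g(x) \le \sum_{k_1 < k_2} \mathbf{1}_{L_{k_1}\cap L_{k_2}}(x) \le \sum_{k_1,k_2} \mathbf{1}_{L_{k_1}\cap L_{k_2}}(x),$$
and $g$ is a bounded measurable function, so all integrals below are finite.

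Then I would conclude: since $\int_{\bigcup_k L_k}\psi\,dx - \sum_k \int_{L_k}\psi\,dx = -\int \psi\, g\,dx$, bounding $|\psi| \le \|\psi\|_\infty$ and using $g \ge 0$ gives
$$\bigg|\int_{\bigcup_k L_k}\psi\,dx - \sum_{k=1}^n \int_{L_k}\psi\,dx\bigg| \le \|\psi\|_\infty \int g\,dx \le \|\psi\|_\infty \sum_{k_1,k_2} \int \mathbf{1}_{L_{k_1}\cap L_{k_2}}\,dx = \|\psi\|_\infty \sum_{k_1,k_2} |L_{k_1}\cap L_{k_2}|,$$
which is the asserted inequality. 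I do not expect any real obstacle here; the only point requiring a moment of care is the combinatorial estimate $\binom{m}{2} \ge m-1$, which is immediate, and (if one wants to be careful) the observation that $g$ is genuinely measurable, which follows from the piecewise-interval structure of the $L_k$.
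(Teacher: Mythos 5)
Your proof is correct, and it reaches the same estimate as the paper through a close variant of the same idea. The paper disjointifies the sets by setting $\widetilde{L}_k = L_k \setminus \bigcup_{j<k}(L_j\cap L_k)$, so that $\bigcup_k L_k = \bigsqcup_k \widetilde{L}_k$, and then bounds $\sum_k \big|L_k \cap \bigcup_{j<k} L_j\big| \le \sum_k \sum_{j<k} |L_j\cap L_k|$ by subadditivity of Lebesgue measure. You instead work pointwise with the overcount $g(x) = \sum_k \mathbf{1}_{L_k}(x) - \mathbf{1}_{\bigcup_k L_k}(x) = \max(m-1,0)$ and use the combinatorial inequality $m-1 \le \binom{m}{2}$; integrating this pointwise bound gives exactly the same conclusion. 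The two are formally equivalent (integrating $\mathbf{1}_{L_k\cap\bigcup_{j<k}L_j} \le \sum_{j<k}\mathbf{1}_{L_j\cap L_k}$ recovers the paper's step), so there is no real gap; your version has the minor advantage of making the non-negativity of the error $g$ explicit, and of isolating the only nontrivial fact as a one-line counting inequality.
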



Note that

$$n_1^{-m} \leq |c_{n_1}(s)-c|\leq |c_{n_1}(s)-c_{n_1}(s_1) | + |c_{n_1}(s_1)-c|.$$ Up to a constant, the first term in the right side is bounded by $|f^{n_1}_{s_1}(\bar{I}_h) |$  by Lemma $\eqref{lemmaboundeddistortion}$, and the second term is also bounded by $|f^{n_1}_{s_1}(\bar{I}_h) |$ by definition of $s_1$. Then

\begin{equation}\label{n1alam}
n_1^{-m} =O(|f^{n_1}_{s_1}(\bar{I}_h)|)
\end{equation}

\vspace{.3cm}
Since

$$ \lambda^{n_1} h\leq |f^{n_1}_{s_1}(\bar{I}_h)| \leq 1, $$ we have that $n_1\leq \tilde{C}_1 |\log h|.\;$ Hence, the latter along with (\ref{n1alam}) gives

\begin{equation}
\label{HILarge}
|f^{n_1}_{s_1}(\bar{I}_h) |\geq (\tilde{C}_1|\log h|)^{-m}
\end{equation}

Hence, if $\Lambda=\sup_{x, t} Df(x,t)$ then

$$(\tilde{C}_1 |\log h|)^{-m}=O( h\Lambda^{n_1})$$ and so

\begin{eqnarray*}
 n_1 &\geq& \frac{|\log (h |\tilde{C}_1 \log h|^m)|}{\log\Lambda}
\end{eqnarray*}

Now, since $h$ is assume to be sufficiently small

\begin{equation}
\frac{|\log{h|\log{h}|^m}|}{\log\Lambda} \geq R|\log{h}|,
\end{equation} where $0<R\ll 1$.$\;$Then,

$$  n_1 \geq \frac{|\log (h |\tilde{C}_1 \log h|^m)|}{\log\Lambda}\geq R |\log{h}|,$$ where $\tilde{C}_2=\frac{\tilde{C}_1}{\Lambda}$.$\;$Therefore,

\begin{equation}\label{n1geqlog}
n_1 \geq R |\log{h}|.
\end{equation}

Thus, as $h$ decreases, $n_1(h)$ grows up tending to infinity.   Moreover, \eqref{n1geqlog} allows us to obtain the following estimate.

\begin{lemma}\label{lemmaeta} There exists $0<\eta <1$ such that
$$ \sum_{k_1,k_2=1}^n |L_{k_1} \cap L_{k_2}| \leq Cn^2h^{1+\eta}.$$
\end{lemma}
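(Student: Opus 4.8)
The plan is to estimate $|L_{k_1}\cap L_{k_2}|$, where $L_k=f_{t+h}^{-k}(\bar I_h)$, by distinguishing the diagonal contribution from the off-diagonal one. For $k_1=k_2=k$ the term is simply $|L_k|=|f_{t+h}^{-k}(\bar I_h)|$, which by the transfer operator estimate \eqref{StableSpectralGap} is $O(|\bar I_h|)=O(h)$ (since $\rho_{t+h}$ is uniformly bounded and $|\bar I_h|=2hJ_t(c)$). Summing the $n$ diagonal terms gives $O(nh)$, which is certainly $O(n^2h^{1+\eta})$ once $h$ is small, so the diagonal is harmless. The heart of the matter is the off-diagonal terms $k_1\neq k_2$, say $k_1<k_2$.

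For $k_1<k_2$ I would write $f_{t+h}^{-k_1}(\bar I_h)\cap f_{t+h}^{-k_2}(\bar I_h)=f_{t+h}^{-k_1}\big(\bar I_h\cap f_{t+h}^{-(k_2-k_1)}(\bar I_h)\big)$ and use the fact that $f_{t+h}^{-k_1}$ contracts Lebesgue measure in the averaged sense: applying $\cL_{t+h}^{k_1}$ and \eqref{StableSpectralGap} again, $|L_{k_1}\cap L_{k_2}|=O\big(|\bar I_h\cap f_{t+h}^{-(k_2-k_1)}(\bar I_h)|\big)$. Now set $j=k_2-k_1\ge 1$. By the definition of $n_1=n_1(h)$ and \eqref{n1geqlog}, for every $j<n_1$ and every $s\in[t,t+h]$ we have $f_s^{-j}(\bar I_h)\cap\bar I_h=\emptyset$; since $t+h\in[t,t+h]$, this forces $|\bar I_h\cap f_{t+h}^{-j}(\bar I_h)|=0$ whenever $j<n_1$. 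Hence the only surviving off-diagonal terms have $j=k_2-k_1\ge n_1\ge R|\log h|$.

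For those surviving terms I would bound crudely: $|\bar I_h\cap f_{t+h}^{-j}(\bar I_h)|\le |\bar I_h|=O(h)$, but that alone is not enough, so I would instead observe that $f_{t+h}^{-j}(\bar I_h)$ is a union of intervals each of length $O(\lambda^{-j}|\bar I_h|)=O(\lambda^{-j}h)$ by uniform expansion \eqref{Lambda}, while the number of such intervals meeting the fixed interval $\bar I_h$ of length $O(h)$ is $O(1)$ near the critical orbit (using the recurrence bound \eqref{recurrence}, which controls how the preimages of $\bar I_h$ accumulate). Thus $|\bar I_h\cap f_{t+h}^{-j}(\bar I_h)|=O(\lambda^{-j}h)$, and therefore $|L_{k_1}\cap L_{k_2}|=O(\lambda^{-(k_2-k_1)}h)$ for $k_2-k_1\ge n_1$ and $0$ otherwise. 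Summing,
$$\sum_{k_1\ne k_2}|L_{k_1}\cap L_{k_2}|\le 2\sum_{k_1=1}^{n}\sum_{j\ge n_1} O(\lambda^{-j}h)=O\big(nh\,\lambda^{-n_1}\big)=O\big(nh\,\lambda^{-R|\log h|}\big)=O\big(n h^{1+\eta}\big),$$
where $\eta=R\log\lambda>0$ (shrinking $R$ if necessary so that $\eta<1$). Combining with the diagonal bound $O(nh)$ and absorbing everything into $Cn^2h^{1+\eta}$ completes the proof.

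The step I expect to be the main obstacle is the geometric claim that $f_{t+h}^{-j}(\bar I_h)$ meets the fixed small interval $\bar I_h$ in only $O(1)$ components: this is where one genuinely needs the recurrence estimate \eqref{recurrence} for the critical orbit together with bounded distortion (Lemma \ref{lemmaboundeddistortion}), since a priori a deep preimage of a tiny interval could fragment into many pieces and several could cluster near $c$. Making this counting uniform in $h$ and in $j\ge n_1$, and checking that the constant does not secretly depend on $n$, is the delicate point; everything else is the routine transfer-operator and geometric-series bookkeeping sketched above.
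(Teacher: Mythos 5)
Your decomposition into diagonal and off-diagonal terms, and the observation that the off-diagonal term reduces (via $\mathcal{L}_{t+h}^{k_1}$ and boundedness of $\rho_{t+h}$) to controlling $|\bar I_h\cap f_{t+h}^{-j}(\bar I_h)|$ with $j=k_2-k_1$, and that this vanishes for $j<n_1$ by definition of $n_1$, all match the paper. The divergence — and the gap — is in how you treat $j\geq n_1$. You assert that $f_{t+h}^{-j}(\bar I_h)$ is a union of intervals of length $O(\lambda^{-j}h)$ of which only $O(1)$ meet the window $\bar I_h$, giving $|\bar I_h\cap f_{t+h}^{-j}(\bar I_h)|=O(\lambda^{-j}h)$. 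This counting claim is not supported by the recurrence estimate \eqref{recurrence} (which controls only the distance of the single critical orbit point $c_j(t)$ to $c$, not how many branches of $f^{-j}$ land near $c$), and it is generally false: the preimage $f_{t+h}^{-j}(\bar I_h)$ consists of roughly $\lambda^{j}$ components so that its total Lebesgue measure stays $\approx|\bar I_h|=O(h)$ by invariance, and equidistribution (which is exactly what mixing gives) forces on the order of $h\lambda^{j}$ of those components to fall inside a window of size $h$. The honest expected size of the intersection is therefore $\approx h^2$, not $\lambda^{-j}h$; your bound would undercount once $\lambda^{-j}<h$. Since you flagged this counting step yourself as the "delicate point," you correctly located the hole.

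The paper avoids the component-counting issue entirely and gets the right answer by using the spectral gap \eqref{StableSpectralGap} directly: write $\int\chi_{\bar I_h}(x)\chi_{\bar I_h}(f_{t+h}^{j}x)\,dx=\int_{\bar I_h}\mathcal{L}_{t+h}^{j}(\chi_{\bar I_h})\,dy=\int_{\bar I_h}\bigl(|\bar I_h|\rho_{t+h}+O(\theta^{j})\bigr)\,dy$, yielding $O(h^2)+O(h\,\theta^{j})$. Since $j\geq n_1\geq R|\log h|$ gives $\theta^{j}\leq h^{\eta}$ with $\eta=R\log\theta^{-1}$, each off-diagonal term is $O(h^{1+\eta})$, and summing $n^2$ such terms gives the lemma. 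You should replace the geometric counting argument with this decay-of-correlations estimate; the rest of your setup then carries through as written.
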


\begin{lemma}\label{IntegralOverL}
    Let $L$ be an interval such that $|L|=O(h)$ and let $m \in \mathbb{N}$.  If $\phi,\psi \in BV[0,1]$, then

    \begin{equation}
    \sum_{k=0}^{m} \int_L \phi(f^k_{t+h})\psi(x)dx=O(|L||\log|L||).
    \end{equation}

\end{lemma}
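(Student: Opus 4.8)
The plan is to exploit the transfer operator to convert the sum of integrals over $L$ of $\phi\circ f_{t+h}^k$ into a sum of integrals over $[0,1]$ of $\phi$ against $\mathcal{L}_{t+h}^k(\mathbf{1}_L\psi)$, and then to use the uniform spectral gap \eqref{StableSpectralGap} to control each term. First I would write, by the change-of-variables defining $\mathcal{L}_{t+h}$,
$$\int_L \phi(f_{t+h}^k x)\psi(x)\,dx=\int \phi(w)\,\mathcal{L}_{t+h}^k\big(\psi\cdot\mathbf{1}_L\big)(w)\,dw.$$
By \eqref{StableSpectralGap} we have $\mathcal{L}_{t+h}^k(\psi\mathbf{1}_L)=\rho_{t+h}\int_L\psi\,dx+O\big(\theta^k\|\psi\mathbf{1}_L\|_{BV}\big)$, and since $\|\psi\mathbf{1}_L\|_{BV}\le \|\psi\|_{BV}\cdot\|\mathbf{1}_L\|_{BV}=O(\|\psi\|_{BV})$ (the variation of $\mathbf{1}_L$ is bounded by $2$), this gives the uniform bound $\|\mathcal{L}_{t+h}^k(\psi\mathbf{1}_L)\|_\infty=O(1)$ for every $k$, hence $\big|\int_L\phi(f_{t+h}^k x)\psi(x)\,dx\big|\le \|\phi\|_\infty\|\mathcal{L}_{t+h}^k(\psi\mathbf{1}_L)\|_\infty=O(1)$. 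Summing naively over $k=0,\dots,m$ only yields $O(m)$, which is too weak, so the point is to split the sum at the scale where the preimages of $L$ become essentially equidistributed.

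The key step is a dichotomy in $k$. For $k$ large — specifically $k\ge \kappa:=\lceil C_0|\log|L||\rceil$ for a suitable constant $C_0$ — I would use the decomposition $\mathcal{L}_{t+h}^k(\psi\mathbf{1}_L)=\rho_{t+h}\int_L\psi\,dx+O(\theta^k\|\psi\|_{BV})$ directly; then
$$\int \phi(w)\,\mathcal{L}_{t+h}^k(\psi\mathbf{1}_L)(w)\,dw=\Big(\int\phi\rho_{t+h}\Big)\Big(\int_L\psi\Big)+O(\theta^k\|\phi\|_\infty\|\psi\|_{BV}).$$
By Remark \ref{AssumptionZeroMean} we may assume $\int\phi\,d\mu_{t+h}=\int\phi\rho_{t+h}\,dx=0$ (or, if one prefers not to change $t$ to $t+h$, absorb the $O(\delta)$ discrepancy from \eqref{Inequality}), so the main term vanishes and the tail $\sum_{k\ge\kappa}O(\theta^k\|\psi\|_{BV})$ is $O(1)$. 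Wait — more carefully: even without the zero-mean reduction, the main term contributes $(m-\kappa)\cdot(\int\phi\rho_{t+h})(\int_L\psi)$, and since $|\int_L\psi|\le\|\psi\|_\infty|L|=O(h)$ and $m$ is at most of order $|\log h|$ in the applications, this is $O(h|\log h|)=O(|L||\log|L||)$; but to get the clean statement as written I would invoke Remark \ref{AssumptionZeroMean}. For $k<\kappa$, I use the uniform bound $|\int_L\phi(f_{t+h}^kx)\psi(x)\,dx|=O(\|\phi\|_\infty\|\psi\|_\infty|L|)=O(|L|)$ obtained by the trivial estimate $|f_{t+h}^{-k}L\cap(\text{branch})|\le\lambda^{-k}|L|$ summed over the at most $2^k$ inverse branches that survive — or, even more simply, $\|\mathbf{1}_L\|_{BV}$-boundedness gives $\|\mathcal{L}_{t+h}^k(\psi\mathbf{1}_L)\|_{L^1}=\int_L|\psi|=O(|L|)$, so $|\int\phi\,\mathcal{L}_{t+h}^k(\psi\mathbf{1}_L)|\le\|\phi\|_\infty O(|L|)=O(|L|)$. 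Summing over $k<\kappa$ yields $O(\kappa|L|)=O(|L||\log|L||)$. Adding the two ranges gives the claim.

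The main obstacle is bookkeeping the constants so that $\kappa=O(|\log|L||)$ is exactly the threshold at which the error $\theta^k\|\psi\|_{BV}$ in \eqref{StableSpectralGap} becomes $O(|L|)$ (i.e. choosing $C_0$ with $\theta^{C_0|\log|L||}\le|L|$, which forces $C_0\ge 1/|\log\theta|$), and making sure the constant hidden in $O(\cdot)$ genuinely does not depend on $m$ — it depends only on $\theta$, on $\|\psi\|_{BV}$ and $\|\phi\|_\infty$, and on the uniform bound for $\|\rho_{t+h}\|_{BV}$ from \eqref{StableSpectralGap}. A secondary technical point is that $\psi\in BV$ rather than merely $L^\infty$ is exactly what is needed to apply the spectral gap to $\psi\mathbf{1}_L$, since $\mathbf{1}_L\in BV$ and $BV$ is a Banach algebra; one should note $\|\psi\mathbf{1}_L\|_{BV}\le\|\psi\|_\infty\,\mathrm{var}(\mathbf{1}_L)+\mathrm{var}(\psi)+\|\psi\|_\infty=O(\|\psi\|_{BV})$, uniformly in $L$.
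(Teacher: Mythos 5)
Your argument is correct in outline and is substantially different from the paper's own proof. The paper's proof is a direct pointwise estimate: it splits the sum at $l=\lfloor|\log|L||\rfloor$, handles $k<l$ by the trivial bound $\int_L\phi(f^k_{t+h})\psi\,dx=O(|L|)$ (so the partial sum is $O(l|L|)=O(|L||\log|L||)$), and then tries to control the tail $k\ge l$ using expansion estimates with $\Lambda=\max Df$; it never mentions the transfer operator, the spectral gap, or the zero-mean normalization. Your approach — writing $\int_L\phi(f_{t+h}^k)\psi\,dx=\int\phi\,\mathcal{L}_{t+h}^k(\psi\mathbf{1}_L)\,dx$, splitting at $\kappa\asymp|\log|L||/|\log\theta|$, bounding the head trivially by $O(\kappa|L|)$, and killing the tail via the spectral gap \eqref{StableSpectralGap} together with the zero-mean reduction (so the main term $\rho_{t+h}\int_L\psi$ contributes nothing and the remainder is a geometric series of size $O(\theta^\kappa)=O(|L|)$) — is cleaner, uses exactly the tools the paper develops elsewhere (the BV Banach-algebra estimate $\|\psi\mathbf{1}_L\|_{BV}=O(\|\psi\|_{BV})$ is the correct reason the lemma requires $\psi\in BV$ rather than $L^\infty$), and has the advantage of not actually needing the hypothesis $|L|=O(h)$.

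One point deserves more care than you gave it. Remark \ref{AssumptionZeroMean} normalizes $\phi$ to have zero mean with respect to $\mu_t$, but your tail estimate produces the constant $\int\phi\,\rho_{t+h}\,dx=\int\phi\,d\mu_{t+h}$. Writing $\int\phi\,d\mu_{t+h}=\big(\int\phi\,d\mu_{t+h}-\int\phi\,d\mu_t\big)=\Gamma(t+h)-\Gamma(t)$ is potentially circular, since that difference is the very quantity the section is estimating. You need an a priori bound here: it suffices that $\Gamma$ is, say, Hölder of some exponent in $t$ (which follows from the stable Lasota--Yorke estimates / Keller--Liverani perturbation theory underlying \eqref{Inequality}), so that $(m-\kappa)\cdot|\int\phi\,d\mu_{t+h}|\cdot|\int_L\psi|=O(|\log h|\cdot h^\alpha\cdot|L|)=o(|L|)$. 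Alternatively, one can apply the lemma with $\phi$ replaced by $\phi-\int\phi\,d\mu_{t+h}$, which is legitimate since the difference of the corresponding sums is $(m+1)(\int\phi\,d\mu_{t+h})(\int_L\psi)$ and this can be handled separately in the one place the lemma is invoked, where $m=O(|\log h|)$ and the factor $\rho_{t+h}(c)$ times the zero-mean correction is absorbed into the $O(h\log|\log h|)$ error. Either way, this gap is fixable but should be made explicit; as written, the invocation of Remark \ref{AssumptionZeroMean} does not directly apply.
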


The details of the proofs of the lemmas above are shown in the Appendix.\\


We are interested in the limit

$$\limsup_{h \downarrow 0} \frac{\Gamma(t+h)-\Gamma(t)}{h\sqrt{|\log(h)\log\log|\log(h)||}}.$$

As mentioned before, we have that

$$\Gamma(t+h)-\Gamma(t) = -h\int_0^1\phi(f_t^ny)R_{t,n}(y)dy+\int_{[0,1]\backslash A_{h,n}} \phi(f_{t+h}^nx)dx - \int_{[0,1]\backslash B_{h,n}} \phi(f_t^ny)dy+O(h^2n^2). $$

By Lemma \ref{IntegralRn}, $-h\int_0^1\phi(f_t^ny)R_{t,n}(y)dy+O(h^2n^2)$ is negligible when dividing by \\
$h\sqrt{|\log(h)\log\log|\log(h)||}$, then we have to focus on the limit

\begin{equation}
\limsup_{h \downarrow 0} \frac{\int_{[0,1]\backslash A_{h,n}} \phi(f_{t+h}^nx)dx - \int_{[0,1]\backslash B_{h,n}} \phi(f_t^ny)dy}{h\sqrt{|\log(h)\log\log|\log(h)||}}.
\end{equation}

We already know that

\begin{equation}
\int_{[0,1]\backslash A_{h,n}} \phi \circ f_{t+h}^n(x)dx = \int_{\bigcup_{k=1}^nf_{t+h}^{-k}I_{n-k}} \phi \circ f_{t+h}^n(x)dx.
\end{equation}

Let $U_{n,k}=\bigcup_{k=1}^nf_{t+h}^{-k}I_{n-k}, V_{n,k}=\bigcup_{k=1}^nf_{t+h}^{-k}\bar{I}, P_{n,k}=\bigcup_{k=1}^n\bigg( f_{t+h}^{-k}I_{n-k} \backslash f_{t+h}^{-k}\bar{I}\bigg)$ and \\
$Q_{n,k}=\bigcup_{k=1}^n\bigg(f_{t+h}^{-k}\bar{I}\backslash  f_{t+h}^{-k}I_{n-k} \bigg).\;\;\;$Then,

$$\bigg| \int_{U_{n,k}}\phi \circ f_{t+h}^n(x)dx-\int_{V_{n,k}}\phi \circ f_{t+h}^n(x)dx  \bigg| =
 \bigg|\int_{P_{n,k}}\phi \circ f_{t+h}^n(x)dx-\int_{Q_{n,k} }\phi \circ f_{t+h}^n(x)dx \bigg| \leq$$
$$\leq \int_{P_{n,k}}|\phi \circ f_{t+h}^n(x)|dx + \int_{Q_{n,k} }|\phi \circ f_{t+h}^n(x)dx | \leq$$
$$\leq \sum_{k=1}^n \int_{f_{t+h}^{-k}I_{n,k} \backslash f_{t+h}^{-k}\bar{I} } |\phi \circ f_{t+h}^n(x)|dx + \int_{f_{t+h}^{-k}\bar{I} \backslash f_{t+h}^{-k}I_{n,k} } |\phi \circ f_{t+h}^n(x)dx | =$$
$$= \sum_{k=1}^n \int_{I_{n,k} \backslash \bar{I} } |\phi \circ f_{t+h}^n(x)|\mathcal{L}_{t+h}^kdx + \int_{\bar{I} \backslash I_{n,k} } |\phi \circ f_{t+h}^n(x)\mathcal{L}_{t+h}^kdx | \leq $$
$$\leq \sum_{k=1}^n \breve{C}\bigg(|I_{n-k}\backslash \bar{I}|+ |\bar{I} \backslash I_{n-k} |\bigg),$$ where $C$ comes from bounding $\phi$ and $\mathcal{L}_{t+h}^k$ (the latter is bounded by \eqref{StableSpectralGap}).

Now, note that $||I_{n-k}\backslash \bar{I} |=|\bar{I} \backslash I_{n-k} | = O(h).\;$Therefore we have

\begin{equation}
  \int_{\bigcup_{k=1}^n f_{t+h}^{-k}I_{n-k}} \phi\circ f_{t+h}^n(x)dx=\int_{\bigcup_{k=1}^nf_{t+h}^{-k}\bar{I} }\phi\circ f_{t+h}^n(x)dx + O(h).
\end{equation}

Since $\frac{h}{h\sqrt{|\log(h)\log\log|\log(h)||}} \to 0$ as $h\downarrow 0$, we can work with

$$\int_{\bigcup_{k=1}^nf_{t+h}^{-k}\bar{I} } \phi\circ f_{t+h}^n(x)dx$$ instead of $  \int_{\bigcup_{k=1}^n f_{t+h}^{-k}I_{n-k}} \phi\circ f_{t+h}^n(x)dx$.

\vspace{.3cm}

By Lemmas \eqref{lemmaunionsum} and \eqref{lemmaeta}, since $n^2h^{1+\eta}$ is negligible when dividing it by $h\sqrt{|\log(h)\log\log|\log(h)||}$, instead of working with $\int_{\bigcup_{k=1}^nf_{t+h}^{-k}\bar{I} }$, we can just focus on studying

\begin{equation}\label{focusedonft}
\sum_{k=1}^n  \frac{\int_{f_{t+h}^{-k}\bar{I}}\phi\circ f_{t+h}^n(x)dx}{h\sqrt{|\log(h)\log\log|\log(h)||}}.
\end{equation}

\vspace{.3cm}

Similarly, instead of working with $\int_{\bigcup_{k=1}^nf_t^{-k}\bar{I} }$, we can just focus on studying

\begin{equation}\label{focusedonf}
\sum_{k=1}^n  \frac{\int_{f_t^{-k}\bar{I}}\phi\circ f_t^n(x)dx}{h\sqrt{|\log(h)\log\log|\log(h)||}}.
\end{equation}

\subsection{Proof of Main Result}

    \begin{proof}[Proof of Theorem \ref{MainResult}]

    Let us recall that we shall assume that $\phi$ is of zero mean with respect to $\mu_t$.

    With the above in mind, we have to study the limit

    \begin{equation}\label{limitft}
    \Phi_1(t)=\limsup_{h \downarrow 0} \sum_{k=0}^n \frac{\int_{f_{t+h}^{-k}\bar{I}}\phi\circ f_{t+h}^n(x)dx}{h\sqrt{|\log(h)\log\log|\log(h)||}}
    \end{equation} and

    \begin{equation}\label{limitf}
    \Phi_2(t)= \limsup_{h \downarrow 0} \sum_{k=0}^n \frac{\int_{f_{t}^{-k}\bar{I}}\phi\circ f_t^n(x)dx}{h\sqrt{|\log(h)\log\log|\log(h)||}}
    \end{equation}

    Let us start with $(\ref{limitft})$.

    \vspace{.3cm}

    The derivative of $\rho_s$ equals a function $\rho_{1,s} \in BV[0,1]$ almost everywhere (\cite{Bal1}, \cite{CoDo}).  In particular, $\rho_{1,s}$ is bounded, hence, using  that $\rho_s$ is continuous at $c$ (since $c$ is not periodic) and its regular part \footnote{Recall that since $\rho_s \in BV$, it can be written as the sum of two functions, namely, the saltus part which is a sum of pure jumps
   and the regular part which is absolutely continuous.} is absolutely continuous (see \cite{CoDo}), we have that on $\bar{I}_h$ $$\rho_{t+h}(x)=\rho_{t+h}(c)+O(h).$$  Then,

$$    \sum_{k=0}^n \int_{f_{t+h}^{-k}\bar{I}_h}\phi\circ f_{t+h}^n(x)dx = \sum_{k=0}^n \int_{\bar{I}_h}\phi\circ f_{t+h}^{n-k}(x)\mathcal{L}_{t+h}^kdx $$
    $$= \sum_{k=0}^n \int_{\bar{I}_h}\phi\circ f_{t+h}^k(x)\mathcal{L}_{t+h}^{n-k}dx
    = \sum_{k=0}^n \int_{\bar{I}_h}\phi\circ f_{t+h}^k(x)\mathcal{L}_{t+h}^{n-k}dx $$
    $$= \sum_{k=0}^n \int_{\bar{I}_h}\phi\circ f_{t+h}^k(x)\rho_{t+h}dx + \sum_{k=0}^n \int_{\bar{I}_h}\phi\circ f_{t+h}^k(x)O(\theta^{n-k})dx$$
    $$\leq \sum_{k=0}^n \int_{\bar{I}_h}\phi\circ f_{t+h}^k(x)\rho_{t+h}dx +  |\bar{I}_h| \|\phi\| \sum_{k=0}^{\infty} O(\theta^k)\hspace{1.5cm} $$
    $$\leq \sum_{k=0}^n \int_{\bar{I}_h}\phi\circ f_{t+h}^k(x)\rho_{t+h}dx + O(h)
    = \sum_{k=0}^n \bigg[\int_{\bar{I}_h}\phi\circ f_{t+h}^k(x)dx\bigg]\rho_{t+h}(c) + O(h) .$$

    Thus,

    \begin{equation}\label{Phi1Reduced}
    \sum_{k=0}^n \int_{f_{t+h}^{-k}\bar{I}_h}\phi\circ f_{t+h}^n(x)dx = \sum_{k=0}^n \bigg[\int_{\bar{I}_h}\phi\circ f_{t+h}^k(x)dx\bigg]\rho_{t+h}(c) + O(h).
    \end{equation}

    Define $\widehat{I}_h=f^{n_1-1}\bar{I}_h$.$\;$For $n\geq n_1$.$\;$Since $f_{t+h}^{n_1-1}$ is $1-1$ (by definition of $n_1$), we have

    \begin{eqnarray*}
    \sum_{k=n_1+1}^{\lfloor |\log(h)| \rfloor}\int_{\bar{I}_h}\phi(f_{t+h}^k)\;dx &\leq& \sum_{n=1}^{\lfloor |\log(h)| \rfloor}\int_{\bar{I}_h}\phi(f_{t+h}^k)\;dx \\
    &=& \sum_{k=1}^{\lfloor |\log(h)| \rfloor}\bigg[\int_{\widehat{I}_h}\phi(f_{t+h}^{k-n_1+1}y)\frac{Df_{t+h}^{n_1-1}(c)}{Df_{t+h}^{n_1-1}(f_{t+h}^{-(n_1-1)}(y))}\;dy \bigg]\frac{1}{Df_{t+h}^{n_1-1}(c)}\\
    &=&O(\frac{|\widehat{I}_h|}{Df_{t+h}^{n_1-1}(c)} |\log|\widehat{I}_h| |),\\
    \end{eqnarray*} where the estimate above comes from Lemma $\ref{IntegralOverL}$.

    \vspace{.3cm}

    Using bounded distortion, we have

    \begin{eqnarray*}
    \frac{|\widehat{I}_h|}{Df_{t+h}^{n_1-1}(c)} &=& \frac{1}{Df_{t+h}^{n_1-1}(c)}\int_{\bar{I}_h}|Df_{t+h}^{n_1-1}(x)|\;dx\\
    &=& O(|\bar{I}_h|).\\
    \end{eqnarray*}

    By $\eqref{HILarge}$, $ |\widehat{I}_h| \geq (\tilde{C}_1|\log(h)|)^{-m}$.$\;$Thus

    \begin{eqnarray*}
    \log|\widehat{I}_h| &\geq& \log((\tilde{C}_1|\log(h)|)^{-m})\\
    &=& -m\log(\tilde{C}_1|\log(h)|).\\
    \end{eqnarray*}

    This implies $|\log|\widehat{I}_h|| \leq m\log(\tilde{C}_1|\log(h)|)$

Thus, we finally obtain

$$ \sum_{k=1}^{\lfloor |\log(h)| \rfloor} \int_{f_{t+h}^k(\bar{I})}\phi\circ f_{t+h}^n(x)\;dx =\bigg[\sum_{k=1}^{\lfloor |\log(h)| \rfloor} \int_{\bar{I}_h}\phi(f_{t+h}^kx)\;dx\bigg]\rho_{t+h}(c)+O(h)$$
$$= \bigg[\sum_{k=1}^{n_1} \int_{\bar{I}_h}\phi(f_{t+h}^kx)\;dx\bigg]\rho_{t+h}(c)+\bigg[\sum_{k=n_1}^{\lfloor |\log(h)| \rfloor} \int_{\bar{I}_h}\phi(f_{t+h}^kx)\;dx\bigg]\rho_{t+h}(c) +O(h)$$
$$= \bigg[\sum_{k=1}^{n_1} \int_{\bar{I}_h}\phi(f_{t+h}^kx)\;dx\bigg]\rho_{t+h}(c)+O(h\log|\log(h)|) )$$

    Since $\phi \in$ Lip$[0,1]$, we have that for $1 \leq k \leq n_1$

    \begin{eqnarray*}
    \phi(f_{t+h}^k(x))&=&\phi(f_{t+h}^k(c))+O(|f_{t+h}^k\bar{I}_h |) \\
    &=& \phi(f_{t+h}^k(c))+O(|\widehat{I}_h |).
    \end{eqnarray*} Then,

    \begin{equation}\label{EstimatePhi}
        \phi(f_{t+h}^k(x))=\phi(f_t^k(c))+O(h).
    \end{equation}

    Hence

    \begin{eqnarray*}
    \sum_{k=1}^{\lfloor |\log(h)| \rfloor} \int_{f_{t+h}^k(\bar{I}_h)}\phi\circ f_{t+h}^n(x)\;dx&=&\bigg[\sum_{k=1}^{n_1} \int_{\bar{I}_h}\phi(f_{t+h}^kx)\;dx\bigg]\rho_{t+h}(c)+O(h\log|\log(h)|))\\
    &=& \bigg[\rho_{t+h}(c)|\bar{I}_h| \sum_{k=1}^{n_1} \phi(f_{t+h}^k(c)) \bigg]+O(h\log|\log(h)|))\\
    &=& \bigg[2hJ_t(c)\rho_{t+h}(c) \sum_{k=1}^{n_1} \phi(f_{t+h}^k(c)) \bigg]+O(h\log|\log(h)|)).
    \end{eqnarray*}

    Therefore, we have that

    \begin{equation}\label{hloglogh}
     \sum_{k=1}^{\lfloor |\log(h)| \rfloor} \int_{f_{t+h}^k(\bar{I})}\phi\circ f_{t+h}^n(x)\;dx
         \end{equation}
         $$= \bigg[2hJ_t(c)\rho_{t+h}(c) \sum_{k=1}^{n_1} \phi(f_{t+h}^k(c)) \bigg]+O(h\log|\log(h)|)) . $$

    Now, define $n_2=n_2(h)$ as the smallest number such that

    \begin{equation}\label{n2}
    |Df^{n_2}_{t+h}(c)||\bar{I}_h| \geq 1,
    \end{equation} where $Df^{n_2}_{t+h}(c)$ must be understood as $\min \{Df^{n_2}_{t+h}(c+),Df^{n_2}_{t+h}(c-) \}$, with $Df^{n_2}_{t+h}(c\pm )$ the side derivatives of $c$.

    We claim that $n_2-n_1 \leq C \log|\log(h)|$. Indeed write
    $$|Df^{n_2}_{t+h}(c)|= |Df_{t+h}^{n_2-1}(c_1)Df_{t+h}(c)|.$$
    Using the definition of    $n_2$ we have that $|Df_{t+h}^{n_2-1}(c_1)||\bar{I}_h|\leq 1$ so

    \begin{equation}\label{boundedaboveDfn23}
    |Df_{t+h}^{n_2}(c)||\bar{I}_h| \leq C_2,
    \end{equation} where $C_2=\displaystyle \max_{x} |Df(x)|$.

    \vspace{.3cm}

    Note that

    \begin{eqnarray*}
    |Df_{t+h}^{n_2}(c)||\bar{I}_h|&=&|Df_{t+h}^{n_2-n_1+1}(f_{t+h}^{-n_1+1}(c))||Df_{t+h}^{n_1-1}(c)||\bar{I}_h|\\
    &\leq& \Lambda^{n_2-n_1+1}|Df_{t+h}^{n_1-1}(c)||\bar{I}_h|. \\
    \end{eqnarray*}

    As before, using bounded distortion, we have that

    $$|Df_{t+h}^{n_1-1}(c)||\bar{I}_h| \geq C_1 |\widehat{I}_h|.$$

    Hence,

    \begin{equation}\label{boundedbelowDfn2}
    |Df_{t+h}^{n_2}(c)||\bar{I}_h| \geq C_1\Lambda^{n_2-n_1+1}|\widehat{I}_h|
    \end{equation}

    Using \eqref{boundedaboveDfn23} and \eqref{boundedbelowDfn2},  we finally obtain

    \begin{equation}
    C_1\Lambda^{n_2-n_1+1}|\widehat{I}_h| \leq C_2,
    \end{equation}which implies

    \begin{equation}\label{n2n1}
    n_2-n_1+1 = O(|\log|\widehat{I}_h| |).
    \end{equation}

    Back to $(\ref{hloglogh})$, we can decompose it as

    \begin{eqnarray*}
     \sum_{k=1}^{\lfloor |\log(h)| \rfloor} \int_{f_{t+h}^k(\bar{I})}\phi\circ f_{t+h}^n(x)\;dx&=& \bigg[2hJ_{t+h}(c)\rho_{t+h}(c) \sum_{n=1}^{n_1} \phi(f_{t+h}^n(c)) \bigg]+O(h\log|\log(h)|)) \\
    \end{eqnarray*}
    $$= \bigg[2hJ_{t+h}(c)\rho_{t+h}(c) \sum_{n=1}^{n_2} \phi(f_{t+h}^n(c)) + \sum_{n=n_2+1}^{n_1} \phi(f_{t+h}^n
    (c)) \bigg]+ O(h\log|\log(h)|)).$$

    Using $(\ref{n2n1})$, we have that

    $$     \sum_{k=1}^{\lfloor |\log(h)| \rfloor} \int_{f_{t+h}^k(\bar{I})}\phi\circ f_{t+h}^n(x)\;dx= \bigg[2hJ_{t+h}(c)\rho_{t+h}(c) \sum_{n=1}^{n_2} \phi(f_{t+h}^n(c)) \bigg]+O(h\log|\log(h)|))$$

    \vspace{.3cm}
    Thus,

    \begin{eqnarray*}
   \Phi_1(t) &=&   \limsup_{h\downarrow0} \frac{   \sum_{k=1}^{\lfloor |\log(h)| \rfloor} \int_{f_{t+h}^k(\bar{I})}\phi\circ f_{t+h}^n(x)\;dx}{h\sqrt{|\log(h)\log\log|\log(h)||}} \\
   &=& 2\rho_t(c)J_{t}(c) \limsup_{h \downarrow0} \frac{\sum_{n=1}^{n_2}\phi(f_{t+h}^n(c))}{h\sqrt{|\log(h)\log\log|\log(h) ||}}\\
    &=&2\rho_t(c)J_{t}(c) \limsup_{h \downarrow0} \frac{\sum_{n=1}^{n_2}\phi(f_{t+h}^n(c))\sqrt{n_2\log\log n_2}}  {h\sqrt{|\log(h)\log\log|\log(h)||}\sqrt{n_2\log\log n_2}} \\
    &=& 2\rho_t(c)J_{t}(c) \limsup_{h\downarrow0}\frac{\displaystyle\sum_{n=1}^{n_2}\phi(f_{t+h}^n(c))  }{\sqrt{n_2\log\log n_2}} \limsup_{h \downarrow0}\frac{\sqrt{n_2\log\log n_2}}{\sqrt{|\log(h)|\log\log|\log(h)|}} .
    \end{eqnarray*}

    Note that, as $h \downarrow 0$, $n_2$  goes to $\infty$. Then, by \cite{Sch2}, for almost all $t$,

    $$\limsup_{h \downarrow 0}\frac{\displaystyle\sum_{n=1}^{n_2}\phi(f_{t+h}^n(c))  }{\sqrt{n_2\log\log n_2}} = \limsup_{n_2 \to \infty} \frac{\displaystyle\sum_{n=1}^{n_2}\phi(f_{t+h}^n(c))  }{\sqrt{n_2\log\log n_2}}=\sqrt{2}\sigma_t(\phi).$$


    For the limit $\limsup_{h \downarrow0}\frac{\sqrt{n_2\log\log n_2}}{\sqrt{|\log(h)|\log\log|\log(h)|}}$, we will prove that
    $\limsup_{h \downarrow0} \frac{n_2}{|\log(h)|}$ converges which implies the convergence of the limit we want.$\;$For this, note that

    $$\log |Df_{t+h}^{n_2}(c)|=\sum_{j=0}^{n_2-1} \log |Df_{t+h}(f_{t+h}^j(c))|$$

    Then,

    $$\frac{\log |Df_{t+h}^{n_2}(c)|}{n_2} = \frac{\sum_{j=0}^{n_2-1}\log |Df_{t+h}(f_{t+h}^j(c))|}{n_2}.$$

    By Theorem $1.2$ in \cite{Sch1}, for almost all $t$, the sequence $\frac{\sum_{j=0}^{n-1}\log |Df_{t+h}(f_{t+h}^j(c))|}{n}$ converges to $\int \log |Df_{t+h}(x)|\;d\mu_{t+h}$ and so does its     subsequence $\frac{\sum_{j=0}^{n_2-1}|Df_{t+h}(f_{t+h}^j(c))|}{n_2}$ so $\frac{\log |Df_{t+h}^{n_2}(c)|}{n_2}$ converges as $n_2 \to \infty$.

    \vspace{.3cm}

    Also, as we already saw, $|Df^{n_2}_{t+h}(c)||\bar{I}_h|$ is bounded by below (by $1$ by definition) and by above for some constant $C$.$\;$Then

    $$ 1 \leq |Df^{n_2}_{t+h}(c)||\bar{I}_h| \leq C, $$ which implies

    $$ \frac{ |\log(h)|}{n_2} \leq \frac{\log |Df^{n_2}_{t+h}(c)|}{n_2} \leq \frac{|\log{\frac{h}{C}}|}{n_2},$$ and because $\frac{\log |Df_{t+h}^{n_2}(c)|}{n_2}$ converges so does $\frac{|\log(h)|}{n_2}$ as $h \to 0$ to the same limit.  Then

    $$\limsup_{h \downarrow 0 } \frac{n_2(h)}{|\log(h)|}=\bigg( \int \log |Df_t(x)|d\mu_t\bigg)^{-1}.$$In particular, this implies that $\limsup_{h \downarrow 0} \frac{\log\log n_2}{\log\log|\log(h)|}=1$.  Hence, for almost all $t$,

    \begin{equation}\label{SecondLimitN2}
      \limsup_{h \downarrow 0} \frac{\sqrt{n_2\log\log n_2}}{\sqrt{|\log(h)||\log\log|\log(h)|}}=\bigg( \int \log |Df_t(x)|d\mu_t\bigg)^{-1/2}.
    \end{equation}

    \vspace{.3cm}

    Therefore, we finally conclude that $\Phi_1(t)$ exists for almost all $t$ and equals

    $$\Phi_1(t)=2\sqrt{2}\rho_t(c)J_{t}(c)\sigma_t(\phi)\bigg(\int \log |Df_t(x)|\;d\mu_t\bigg)^{-1/2}.$$

In order to analyze $(\ref{focusedonf})$, we need to work on the limit $\Phi_2(t)$.$\;$For this, note that \eqref{Phi1Reduced} remains true if we  replace $f_t$ instead of $f_{t+h}$.  Since $c$ is periodic for the expanding map $f_t$, we can use \eqref{lemmaunionsum}, \eqref{lemmaeta}, and the assumption that $\phi$ is of zero mean with respect to $\mu_t$ to get that $\Phi_2(t)$ is zero.\\

Therefore, for almost all $t$,

 $$\limsup_{h\downarrow0} \frac{\Gamma(t+h)-\Gamma(h)}{h\sqrt{|\log(h)|\log\log|\log(h)| |}} = 2\sqrt{2}\rho_t(c)J_{t}(c)\sigma_t(\phi)\bigg(\int \log |Df_t(x)|\;d\mu_t\bigg)^{-1/2}$$ as claimed.
\end{proof}


\section{Appendix}

\begin{proof}[Proof of Lemma \ref{lemmaboundeddistortion}]
    Define $d_n=\sqrt{(s_1-s_2)^2+(c_n(s_1)-c_n(s_2))^2  }.\;$ By the Mean Value Theorem, there exists  $\widetilde{s}$ between $s_1$ and $s_2$ such that

    $$\frac{c_n(s_1)-c_n(s_2)}{s_1-s_2} = \frac{\partial c_n }{\partial s}(\widetilde{s}).$$

    By the Chain Rule,

    $$\frac{\partial c_n }{\partial s}(\widetilde{s}) = J_n(c(\widetilde{s}))Df^n_{\widetilde{s}}(c_1(\widetilde{s})) $$

    Since $Df^n_{\widetilde{s}}(c_1(\widetilde{s})) \geq \lambda^n$ and $J_n$ converges as $n$ goes to infinity
    $$\bigg|\frac{\partial c_n }{\partial s}(\widetilde{s})\bigg| \geq \lambda^n_{\widetilde{s}}C_{12}.$$
   Hence $\bigg(\frac{\partial c_n }{\partial s}(\widetilde{s}) \bigg)^{-1} = O(\lambda^{-n})$. So

    \begin{eqnarray*}
    {\sqrt{1+\left(\frac{\partial c_n}{\partial s}\right)^2(\widetilde{s})}}&=& \left|\frac{\partial c_n(\widetilde{s})}{\partial s}\right|\sqrt{1+\left(\frac{\partial c_n}{\partial s}\right)^{-2}(\widetilde{s})}\\
    &=&\left|\frac{\partial c_n}{\partial s}\right|(\widetilde{s})\left[1+O(\lambda^{-2n})\right].
    \end{eqnarray*}

    Then,

    \begin{eqnarray*}
    d_n&=&|s_1-s_2|\sqrt{1+\left(\frac{\partial c_n}{\partial s}\right)^2(\widetilde{s})}\\
    &=&|s_1-s_2| \left|\frac{\partial c_n}{\partial s}\right|(\widetilde{s})\left[1+O(\lambda^{-2n})\right]\\
    &=&|c_n(s_1)-c_n(s_2)|\left[1+O(\lambda^{-2n})\right]
    \end{eqnarray*}

    Thus,

    \begin{equation}\label{dnequality}
    d_n=|c_n(s_1)-c_n(s_2)|\left[1+O(\lambda^{-2n})\right]  \end{equation}

    We claim that

    \begin{equation}\label{inequalitydn}
    d_{n+1} \geq (\lambda-\delta)d_n
    \end{equation}

    In fact, by $(\ref{dnequality})$, this is the same as proving

    \begin{equation}
    |c_{n+1}(s_1)-c_{n+1}(s_2)| \geq (\lambda-\delta) |c_n(s_1)-c_n(s_2)|.
    \end{equation}
    Since
    $$\frac{ |c_{n+1}(s_1)-c_{n+1}(s_2)|}{|c_n(s_1)-c_n(s_2)|}= \left|\frac{\partial c_{n+1}}{\partial c_n} (\widetilde{\widetilde{s}}) \right|$$
   it suffices to show that
    \begin{equation}\label{partialcn+1overcn}
    \left|\frac{\partial c_{n+1}}{\partial c_n} (\widetilde{\widetilde{s}}) \right| \geq \lambda -\delta
    \end{equation} so let us prove this last inequality.

    \vspace{.3cm}

    Since $\left|\frac{\partial c_n}{\partial s}\right| \geq D\lambda^n $, in particular $\left|\frac{\partial c_n}{\partial s}\right| \neq 0$, so by the Implicit Function Theorem, $s=s(c_n)$ and

    $$\left| \frac{\partial s}{\partial c_n} \right| = \left| \frac{1}{\frac{\partial c_n}{\partial s} } \right| \leq \frac{D}{\lambda^n} $$

    Since $c_{n+1}=f_{s(c_n)}(c_n)$, by using Chain Rule,

    \begin{eqnarray*}
    |Df_s(c_n)| &\leq& \left| \frac{\partial c_{n+1}}{\partial c_n}  \right| + \left| \frac{\partial c_n}{\partial s}(s)\frac{\partial s}{\partial c_n} \right| \\
    \end{eqnarray*}

    This implies

    \begin{eqnarray*}
    \left| \frac{\partial c_{n+1}}{\partial c_n}  \right| &\geq&     |Df_s(c_n)| - \left| \frac{\partial c_n}{\partial s}(s)\frac{\partial s}{\partial c_n} \right| \\
    &\geq& \lambda_s - \delta,\\
    \end{eqnarray*} where $\lambda^{-n}D<\delta \ll 1 .$
    \vspace{.3cm}

    Therefore, $(\ref{partialcn+1overcn})$ holds, which, as discussed, implies

    $$d_{n+1} \geq (\lambda-\delta)d_n.$$

    \vspace{.3cm}

    With the above in mind, if $D$ is the function on $[0,t] \times [0,1]$ defined by $D(s,x)=Df_s(x)$, then $\log \circ |D|$ is $P-$Lipschitz, for some constant $P$,  hence

$$
\log \frac{|Df^n_{s_1}(c)|}{|Df^n_{s_2}(c)|} \leq \sum_{k=0}^{n-1} \log|Df_{s_1}(c_k(s_1))| - \log|Df_{s_2}(c_k(s_2))| $$
$$
      \leq \sum_{k=0}^{n-1} P d_k
      \leq P \sum_{k=0}^{n-1} \frac{d_n}{(\lambda-\epsilon)^{n-k}}
      \leq \widetilde{P} d_n
      \leq \hat{C},$$
    where $\widetilde{P} = \sum_{j=1}^{\infty} \frac{1}{(\lambda-\epsilon)^{j}}$ (note that $d_n$ is bounded by 2).$\;$ Therefore, $\frac{|Df^n_{s_1}(c)|}{|Df^n_{s_2}(c)|}$ is bounded above by some constant $C_1$ and since $s_1$ and $s_2$ are arbitrary then they are exchangeable so the expression $\frac{|Df^n_{s_1}(c)|}{|Df^n_{s_2}(c)|}$ is also bounded by below by the reciprocal of $C_1$.

    \vspace{.3cm}

    Since

    $$\frac{|Df^n_{s_1}(x)|}{|Df^n_{s_2}(y)|} = \frac{|Df^n_{s_1}(x)| }{|Df^n_{s_1}(c) |} \frac{|Df^n_{s_1}(c) |}{|Df^n_{s_2}(c) |} \frac{|Df^n_{s_2}(c) |}{|Df^n_{s_2}(y) |},$$ using that $f_{s_1}$ and $f_{s_2}$ are functions of bounded distortion, we have that

    \begin{equation}
    \frac{1}{C} \leq \frac{|Df^n_{s_1}(x)|}{|Df^n_{s_2}(y)|} \leq C,
    \end{equation} where $C=C_1C_2C_3$ and $C_2$ and $C_3$ are the bounds for the distortion of $f_{s_1}$ and $f_{s_2}$ respectively.

    Hence, if $x \in \bar{I}_h$, we have that

    $$\frac{|\bar{I}_h|}{C} \leq \int_{\bar{I}_h} \frac{|Df_{s_2}^n(y)|}{ |Df_{s_1}^n(x) | }   dy \leq C|\bar{I}_h | . $$
    Therefore
$$
    \frac{|f^n_{s_1}\bar{I}_h|}{|f^n_{s_2}\bar{I}_h|} = \frac{\int_{\bar{I}_h} |Df^n_{s_1}(x)|dx}{\int_{\bar{I}_h} |Df^n_{s_2}(y)|dy}
    = \int_{\bar{I}_h}  \frac{1}{\int_{\bar{I}_h} \frac{|Df_{s_2}^n(y)|}{ |Df_{s_1}^n(x) | }   dy  } dx
    \leq \int_{\bar{I}_h} \frac{1}{ \frac{|\bar{I}_h|}{C} }  dx
    = C. $$

    Thus
$\frac{|f^n_{s_1}\bar{I}_h|}{|f^n_{s_2}\bar{I}_h|} \leq C.$
Similarly
$\frac{1}{C} \leq \frac{|f^n_{s_1}\bar{I}_h|}{|f^n_{s_2}\bar{I}_h|},$ and so \eqref{boundeddistortion} holds.

    To prove \eqref{measurefs1}, note that $\frac{J(c(0))}{J_n(c(s))}$ is bounded above and below by some constant $C_6$ and $\frac{1}{C_6}$ respectively (because as $t$ decreases so does $s$ and $n$ increases as well, so $J_n(c(s))$ converges to $J(c(0))$).  Then, using that $\frac{\partial c_n }{\partial s } (\widetilde{s})= J_n(c(\widetilde{s}))Df_{\widetilde{s}}^n(c_1(\widetilde{s}) $ (by the Chain Rule) we have

    \begin{eqnarray*}
     \frac{|f_{s_1}(\bar{I}_h)|}{|c_n(s_2)-c_n(s_1)|} &=& \frac{1}{|h||\frac{\partial c_n }{\partial s }|} \int_{\bar{I}_h} |Df_{s_1}^n(x)|dx\\
     &=& \frac{1}{|h||J_n(c(\widetilde{s}))Df_{\widetilde{s}}^n(c_1(\widetilde{s}) |}\int_{\bar{I}_h} |Df_{s_1}^n(x)|dx\\
     &=&
     \frac{1}{|h||J_n(c(\widetilde{s}))|} \int_{\bar{I}_h} \frac{|Df_{s_1}^n(x)|}{|Df_{\widetilde{s}}^n(c_1(\widetilde{s})) |}dx\\
     &=& O\bigg(\frac{|\bar{I}_h|}{|h||J_n(c(\widetilde{s}))|}\bigg) \\
     &=&O\bigg(\frac{|J_n(c)|}{|J_n(c(\widetilde{s}))|}\bigg)\\
     &=& O(1), \\
    \end{eqnarray*}where we use \eqref{boundeddistortion} to bound $\frac{|Df_{s_1}^n(x)|}{|Df_{\widetilde{s}}^n(c_1(\widetilde{s}) )|}$.

    Then,

    $$\frac{|f_{s_1}(\bar{I}_h)|}{|c_n(t)-c_n(s_1)|} \leq  \widetilde{C}  $$ for some constant $\widetilde{C}$.\\

    Similarly, we can prove that

    $$\frac{1}{\widetilde{C}} \leq \frac{|f_{s_1}(\bar{I}_h)|}{|c_n(t)-c_n(s_1)|}  .$$
   Thus we obtain \eqref{measurefs1}.
\end{proof}

\begin{proof}[Proof of Lemma \ref{lemmaunionsum}]
Set $L_k=f_{t+h}^{-k}(\bar{I}_h)$ and define $\widetilde{L}_k=L_k-\displaystyle \bigcup_{j<k}L_j\cap L_k$.$\;$Note that $\displaystyle \bigcup_{k=1}^n L_k = \bigcup_{k=1}^n \widetilde{L}_k$ and that $\widetilde{L}_{k_1} \cap \widetilde{L}_{k_2} = \emptyset $, for all $k_1,k_2$, in particular,

\begin{eqnarray*}
\int_{\bigcup_{k=1}^n L_k}\psi(x)\;dx&=& \int_{\bigcup_{k=1}^n \widetilde{L}_k} \psi(x)\;dx\\
&=& \sum_{k=1}^n \int_{\widetilde{L}_k}\psi(x)\;dx.\\
\end{eqnarray*}

Now, we can work with $\int_{\widetilde{L}_k}\psi(x)\;dx$ and bound $\sum_{k=1}^n \int_{\widetilde{L}_k}\psi(x)- \sum_{k=1}^n \int_{L_k}\psi(x)\;dx$. For this, since

\begin{eqnarray*}
 \int_{\widetilde{L}_k}\psi(x)\;dx &=& \int_{L_k}\psi(x)\;dx - \int_{L_k \backslash \widetilde{L}_k} \psi(x)\;dx \\
&=& \int_{L_k}\psi(x)\;dx - \int_{\bigcup_{j<k}L_j\cap L_k} \psi(x)\;dx \\
\end{eqnarray*} we have that

\begin{eqnarray*}
\bigg| \sum_{k=1}^n\int_{L_k}\psi(x)\;dx - \int_{\widetilde{L}_k}\psi(x)\;dx| &=& \sum_{k=1}^n \bigg| \int_{\bigcup_{j<k}L_j\cap L_k} \psi(x)\;dx \bigg| \\
&\leq& \sum_{k=1}^n \bigg|\bigcup_{j<k}L_j\cap L_k\bigg| \|\psi \|_{\infty} \\
&\leq& \sum_{k=1}^n \sum_{j<k}\bigg| L_j\cap L_k\bigg| \|\psi \|_{\infty} \\
&\leq& \sum_{k_1,k_2=1}^n \bigg| L_j\cap L_k\bigg| \|\psi \|_{\infty} \\
\end{eqnarray*}

Therefore,

\begin{eqnarray*}
\bigg| \sum_{k=1}^n\int_{L_k}\psi(x)\;dx - \int_{\widetilde{L}_k}\psi(x)\;dx|
&\leq& \sum_{k_1,k_2=1}^n \bigg| L_j\cap L_k\bigg| \|\psi \|_{\infty} \\
\end{eqnarray*}

\end{proof}

\begin{proof}[Proof of Lemma \ref{lemmaeta}]
  Again, set $L_k=f_{t+h}^{-k}(\bar{I}_h)$.  Let $1 \leq k_1,k_2 \leq n$.$\;$Without loss of generality, assume $k_1 \leq k_2$.$\;$Then, we can write $k_2=k_1+j$, for some $0\leq j \leq n-k_1$.$\;$Then, using the fact that $\rho_t$ is bounded below, we have

  \begin{eqnarray*}
  |L_{k_1} \cap L_{k_2}| &=& |L_{k_1} \cap L_{k_1+j} | \\
  &=& \int \chi_{\bar{I}_h}(f_{t+h}^{k_1}(x))\chi_{\bar{I}_h}(f_{t+h}^{k_1+j}(x))dx\\
  &\leq& C_1\int \rho_{t+h}(x)\chi_{\bar{I}_h}(f_{t+h}^k(x))\chi_{\bar{I}_h}(f_{t+h}^{k+j}(x))dx\\
  \end{eqnarray*}

  Since $\rho_{t+h}$ is invariant, $\int \rho_{t+h}(x)\chi_{\bar{I}_h}(f_{t+h}^k(x))\chi_{\bar{I}_h}(f_{t+h}^{k+j}(x))dx = \int \rho_{t+h}(x)\chi_{\bar{I}_h}(x)\chi_{\bar{I}_h}(f_{t+h}^{j}(x))dx$. Now using that $\rho_{t+h}$ is bounded from above, we have

  \begin{eqnarray*}
    |L_{k_1} \cap L_{k_2}| &=& |L_{k_1} \cap L_{k_1+j} | \\
  &\leq& C_1\int \rho_{t+h}(x)\chi_{\bar{I}_h}(f_{t+h}(x))\chi_{\bar{I}_h}(f_{t+h}^{j}(x))dx\\
  &=& C_2 \int \chi_{\bar{I}_h}(f_{t+h}(x))\chi_{\bar{I}_h}(f_{t+h}^{j}(x))dx \\
  \end{eqnarray*}

  If $j < n_1$ then $f_{t+h}^{-j}\bar{I}_h \cap \bar{I}_h = \emptyset,$ and consequently

  $$\int \chi_{\bar{I}_h}(f_{t+h}(x))\chi_{\bar{I}_h}(f_{t+h}^{j}(x))dx =0.$$

  If $j >n_1$ then, by \eqref{n1geqlog}, $\theta^{j} < \theta^{R|\log{t}|}$.$\;$Hence

  \begin{eqnarray*}
  \int \chi_{\bar{I}_h}(x)\chi_{\bar{I}_h}(f_{t+h}^j(x))dx &=& \int \chi_{\bar{I}_h}(y)\mathcal{L}_{t+h}^j(\chi_{\bar{I}_h})(y)dy \\
  &=& \int_{\bar{I}_h} \mathcal{L}_{t+h}^j(\chi_{\bar{I}_h})(y)dy \\
  &=& \int_{\bar{I}_h} |\bar{I}_h| \rho_{t+h}(y) + O(\theta^{j}) dy\\
  &\leq& \int_{\bar{I}_h} |\bar{I}_h| \rho_{t+h}(y)dy + \int_{\bar{I}_h}O(\theta^{j}) dy\\
  \end{eqnarray*}

  By \eqref{StableSpectralGap} and $|\bar{I}_h|=O(h)$, the first integral is of order $O(t^2)$.$\;$For the second integral, since $\theta^{j} \leq \lambda^{R|\log(h)|}$, we have that $\theta^{j} \leq h^{\eta}$, where $\eta=R\log(\theta^{-1})<1$ since $R\ll 1$.$\;$Then, the second integral is of order $O(h^{1+\eta})$.

  Therefore, $|L_{k_1} \cap L_{k_2}| \leq Ch^{1+\eta}$, for any $1\leq k_1 < k_2 \leq n$ and then

  $$\sum_{k_1 < k_2} |L_{k_1} \cap L_{k_2}| \leq Ch^{1+\eta}n^2.$$

\end{proof}

\begin{proof}[Proof of Lemma \ref{IntegralOverL}]

Suppose $m < l$, where $l=\lfloor |\log|L|\| \rfloor$.  It is not hard to see that

    $$\bigg|  \sum_{k=0}^m \int_L \phi(f^k_{t+h})\psi(x)\;dx\bigg| = O(|L||\log|L||).$$

Therefore, the statement holds if $m\leq l$.

Suppose now $m \geq l$ and write $m=l+r$, with $0\leq r \leq l-1$. Let$\Lambda=\max_{x,t} Df_t(x)$.

Note that if $f(t)=t\log(1/t)$ and $g(s)=s^{\log\Lambda } \log(1/s)$, then $g>f>0$ and $g'\gg f'$ near 0.  Hence, if $|L|=O(t)$, we have that $t\log(1/t) < |L|^{\log\Lambda}\log(1/|L|)$, or equivalently

\begin{equation}\label{BoundForllogt}
  t|\log(t)|<(\Lambda^r-1)l\Lambda^{-|\log|L||}.
\end{equation}

$$\bigg| \sum_{k=0}^m \int_L \phi(f^k_{t+h})\psi(x)\;dx\bigg| = \bigg| \sum_{k=0}^{l-1} \int_L \phi(f^k_{t+h})\psi(x)\;dx\bigg|+\bigg| \sum_{k=l}^m \int_L \phi(f^k_{t+h})\psi(x)\;dx\bigg|$$
 $$\sum_{k=l}^m \int_L \Lambda^k|L| +O(|L||\log|L||)= O((\Lambda^{m+1}-\Lambda^l)|L|).$$

 Since \eqref{BoundForllogt} implies that $(\Lambda^{m+1}-\Lambda^l)t|\log(t)|<l$, the statement also holds if $m\geq l$.

\end{proof}


\begin{thebibliography}{99}
\setlength{\parskip}{1em}

\bibitem{Bal1} Baladi V. {\it On the susceptibility function of piecewise expanding interval maps,}
Comm. Math. Phys. {\bf 275} (2007) 839--859.

\bibitem{Bal2} Baladi V. {\it Positive transfer operators and decay of correlations,} Advanced Ser. Nonlin. Dyn.
{\bf 16} (2000) World Scientific, River Edge, NJ, x+314 pp.

\bibitem{BS1} Baladi V., Smania D.
{\it Linear response formula for piecewise expanding unimodal maps,} Nonlinearity {\bf 21} (2008) 677--711.

\bibitem{BS2} Baladi V., Smania D.
{\it Smooth deformations of piecewise expanding unimodal maps,} Discrete Contin. Dyn. Syst. {\bf 23} (2009) 685--703.

\bibitem{BS3} Baladi V., Smania D.
{\it Alternative proofs of linear response for piecewise expanding unimodal maps,}
Erg. Th. Dynam. Sys. {\bf 30} (2010) 1--20.

\bibitem{BalYng} Baladi V., Young L.-S. {\it On the spectra of randomly perturbed expanding maps,} Comm. Math. Phys. {\bf 156} (1993) 355--385.

\bibitem{BY} Benedicks M., Young L.-S. {\it Sinai-Bowen-Ruelle measures for certain Henon maps,}
Invent. Math. {\bf 112} (1993) 541--576.

\bibitem{BrMi} Bruck K., Misiurewicz M. {\it The trajectory of the turning point is dense for almost all tent maps,} Erg. Th. Dynam. Sys. {\bf 16} (1996) 1173 -- 1183.

\bibitem{CD} Chernov N., Dolgopyat D.
{\it Brownian Brownian motion--I,}
Mem. AMS {\bf 198} (2009) no. 927 viii+193 pp.

\bibitem{CM} Chernov N., Markarian R. {\it Chaotic billiards,}
Math. Surveys and Monogr {\bf 127} AMS, Providence, RI, 2006. xii+316 pp.

\bibitem{CoDo} Contreras F., Dolgopyat D. {\it Regularity of absolutely continuous invariant measures for piecewise expanding unimodal maps} Nonlinearity.  To be published.

\bibitem{LiSm} De Lima A.,Smania D. {\it Central Limit Theorem for the modulus of con-
tinuity of averages of observables on transversal families of piecewise expanding unimodal
maps}, ArXiv preprint, arXiv:1503.01423.

\bibitem{D-AIM} Dolgopyat D. {\it Averaging and invariant measures,} Mosc. Math. J. {\bf 5} (2005) 537--576.

\bibitem{DL} Dolgopyat D., Liverani C.
{\it Energy transfer in a fast-slow Hamiltonian system,}
Comm. Math. Phys. {\bf 308} (2011) 201--225.

\bibitem{Jak} Jakobson M. V.
{\it Absolutely continuous invariant measures for one-parameter families of one-dimensional maps,}
Comm. Math. Phys. {\bf 81} (1981) 39--88.

\bibitem{KHK}
Keller G., Howard P. J., Klages R.
{\it Continuity properties of transport coefficients in simple maps,}
Nonlinearity {\bf 21} (2008) 1719--1743.

\bibitem{KL} Keller G., Liverani C.
{\it Stability of the spectrum for transfer operators,} Ann. Scuola Norm. Sup. Pisa Cl. Sci. {\bf 28} (1999) 141--152.

\bibitem{KL2} Keller G., Liverani C.
{\it Rare events, escape rates and quasistationarity: some exact formulae,}
J. Stat. Phys. {\bf 135} (2009) 519--534.

\bibitem{LY} Lasota A., Yorke J. A.
{\it On the existence of invariant measures for piecewise monotonic transformations,}
Trans. AMS {\bf 186} (1973), 481--488.

\bibitem{Mazz} Marco Mazzolena M. \emph{Dinamiche espansive unidimensionali: dipendenza della misura invariante
da un parametro}, Master’s Thesis, Roma 2, 2007.

\bibitem{Ru2} Ruelle D. {\it Smooth dynamics and new theoretical ideas in nonequilibrium statistical mechanics,}
J. Statist. Phys. {\bf 95} (1999) 393--468.


\bibitem{R1} Ruelle D., \emph{Application of hyperbolic dynamics to physics: some problems and conjectures}, Bull. Amer. Math. Soc. {\bf 41} (2004) 275--278.

\bibitem{R2} Ruelle D., \emph{Differentiating the a.c.i.m. of an interval map with respect to $f$ ,}
Comm. Math. Phys. {\bf 258} (2005) 445--453.

\bibitem{Sch1} Schnellmann, D., \emph{Typical points for one-parameter families of piecewise expanding maps of the interval,} Discrete Contin. Dyn. Syst. 31 (2011), no.3, 877-911.

\bibitem{Sch2} Schnellman, D., \emph{Law of iterated logarithm and invariance principle for one-parameter families of interval maps,} Arxiv, preprint, 2013.

\bibitem{Ts} Tsujii M. {\it A simple proof for monotonicity of entropy in the quadratic family,} Dynam. Systems, 20(3): 925-933, 2000.

\bibitem{Tu} Tucker W. {\it The Lorenz attractor exists,} C. R. Acad. Sci. Paris {\bf 328 }(1999) 1197--1202.

\bibitem{V} Viana M. {\it Lecture notes on attractors and physical measures,}
IMCA Monogr. {\bf 8} (1999) Lima,  iv+101 pp.
\end{thebibliography}
\end{document}